\theoremstyle{break}
\newtheorem{thm}{Theorem}[section]
\theoremstyle{definition}
\newtheorem{defi}[thm]{Definition}
\newtheorem{exm}[thm]{Example}
\theoremstyle{remark}
\newtheorem{rem}[thm]{Remark}
\newtheorem{prop}[thm]{Proposition}
\newcommand{\eps}{\varepsilon}
\newcommand{\orb}{\mathrm{orb}}
\newcommand{\C}{\mathrm{C}}
\newcommand{\phy}{\varphi}
\title{What can Koopmanism do for attractors in dynamical systems?}
\author{Viktoria Kühner}
\address{Viktoria Kühner, Mathematisches Institut, Universität Tübingen,
Auf der Morgenstelle 10, D-72076 Tübingen, Germany}
\email{viku@fa.uni-tuebingen.de}
\begin{document}
\begin{abstract}We characterize the longterm behavior of a semiflow on a compact space $K$ by asymptotic properties of the corresponding Koopman semigroup. In particular, we compare different concepts of attractors, such as asymptotically stable attractors, Milnor attractors and centers of attraction. Furthermore, we give a characterization for the minimal attractor for each mentioned property. The main aspect is that we only need techniques and results for linear operator semigroups, since the Koopman semigroup permits a global linearization for a possibly non-linear semiflow.\end{abstract}\vspace{1em}
\dedicatory{To Prof. S. H. Kulkarni on the occasion of his 65th birthday}
\setlength\parindent{0pt}
\maketitle

The concept of an \emph{attractor} of a dynamical system has been of great interest in the last 50 years. After the term first occurred in 1964 in \cite[p.55]{ausl}, cf. \cite[p.177]{milnor}, many variations and modifications have been defined, each of them yielding different examples. The survey article ``On the Concept of Attractor'' from 1985 by J. Milnor, cf. \cite{milnor}, treats the history of its many definitions and even adds an additional concept. He justifies this by describing some of the stability properties in the previous definitions as ``awkward'', \cite[p.178]{milnor}. In his opinion prior definitions seem to be too restrictive omitting some interesting examples, \cite[p.178]{milnor}. \\

The scope of this article is to establish a systematic hierarchy of the attractors mentioned in \cite{milnor} and, additionally, treat so-called \emph{minimal centers of attraction}. We will do so by ``translating'' these concepts into operator theoretic terms by linearizing the non-linear dynamical system globally. We call this process \emph{``Koopmanism''}. Indeed, this idea first appeared around 1930 in the papers \cite{koopman} by B. O. Koopman and \cite{neumann} by J. von Neumann and provided the precise mathematical terms to treat the so-called \emph{ergodic hypothesis} from L. Boltzmann formulated in \cite{boltzmann}. It is based on the distinction between the \emph{state space} $K$ of a (physical) system and the associated \emph{observable space} $\mathcal{O}$ being a (vector) space of real or complex valued functions on $K$. If the non-linear semiflow 
\[\varphi_t \colon K \to K \, , \ t\in \mathbb{R}_{+} \, , \] describes the dynamics on the state space, the maps 
\[f\mapsto T(t)f:=f\circ \varphi_t \, , \ f\in \mathcal{O} \, , \] become linear operators and, if $\mathcal{O}$ remains invariant, $(T(t))_{t\geq 0}$ is a one-parameter semigroup of linear operators on $\mathcal{O}$. \\

This idea led to the proof of the classical ergodic theorems of J. von Neumann \cite{neumannproof} and G. D. Birkhoff \cite{birkhoffproof} and even gave rise to \emph{ergodic theory} as a mathematical discipline.\\

The recent state of the art of this operator theoretic approach, called ``Koopmanism'', to ergodic theory is presented in the monograph \cite{erg}. In this paper we show how this can be used to classify and discuss attractors in topological dynamical systems. \\

For this purpose we consider a pair $(K,(\varphi_t)_{t\geq 0})$ consisting of a compact topological Hausdorff space $K$ and a continuous semiflow $(\varphi_t)_{t\geq 0}$ on $K$, cf. \cite[Def.3.1]{lecture}.
On the Banach space $\mathrm{C}(K)$ of all real-valued continuous functions on $K$ we obtain the corresponding $C_0$-semigroup $(T(t))_{t\geq 0}$, called \emph{Koopman semigroup}, given by the operators
\[T(t)f:=f\circ \varphi_t \textrm{ for } f\in \mathrm{C}(K)\, , \ t\geq 0 \, . \]

An \emph{attractor} is a closed and $(\varphi_t)_{t\geq 0}$-invariant subset $\emptyset\neq M \subseteq K$ possessing a certain asymptotic property. See \cite{milnor} for a survey of various concepts. In our perspective, every such subset of $K$ correlates with the closed and $(T(t))_{t\geq 0}$-invariant ideal $I_M:=\{f\in \mathrm{C}(K)\mid f|_M \equiv 0\}$ in the Banach algebra $\mathrm{C}(K)$. Essential to this matter is that all closed ideals in $\mathrm{C}(K)$ are of the form $I_M$ where $M$ is a closed subset of $K$ (cf. \cite[Theorem 4.8]{erg}). \\
{\begin{center}\begin{tabular}{lcr}$M\subseteq K$ closed subset & $\leftrightarrow$ & $I_M\subseteq \mathrm{C}(K)$ closed ideal \\ && \\
$M$ $(\varphi_t)_{t\geq 0}$-invariant & $\leftrightarrow$ & $I_M$ $(T(t))_{t\geq 0}$-invariant. \end{tabular}\end{center}}\vspace{1em}
Given an attractor $M$, our idea is to restrict the Koopman semigroup to the corresponding closed ideal $I_M$ and characterize the long-term behavior of $(\varphi_t)_{t\geq 0}$  around $M$ by asymptotic properties of $(T(t))_{t\geq 0}$ restricted to $I_M$. So our Leitmotiv can be explained as\\
{\begin{center}\begin{tabular}{lcr}``$\varphi_t\to M$'' & $\leftrightarrow$ & ``$T(t)\big|_{I_M}\to 0$'' \end{tabular}\end{center}}\vspace{1em}

The idea to characterize attractivity properties of invariant sets of a flow by stability properties of the Koopman operators restricted to functions vanishing on the attractor is due to A. Mauroy and I. Mezi\'{c}, see \cite[II.Prop.1]{mez}. Their stability corresponds to what we call weak stability later in this article.\\

In Section 1 we overview the stability theory for strongly continuous operator semigroups with more details on almost weak stability. In the following sections we then apply this theory to attractors in dynamical systems. In Section 2 we characterize absorbing sets and in Section 3 treat well-known attractivity and stability properties of dynamical systems by ``translating'' them into stability properties of the restricted Koopman semigroup. We then, in Section 4, prove the existence of minimal attractors and characterize these for each possible asymptotic behavior.\\

Now we recall some basic facts and fix the notation. Let $K$ be a compact Hausdorff space. A family of self-mappings $(\varphi_t)_{t\geq 0}$ on $K$ 
is called \emph{semiflow} if $\varphi_0\equiv id_K$ and $\varphi_{t+s}=\varphi_t\circ \varphi_s$ for all $s, t \geq 0$. We call $(\varphi_t)_{t\geq 0}$ a \emph{continuous semiflow} if 
\begin{align*} [0,\infty)\times K &\to K \, , \\
(t,x)&\mapsto \varphi_t(x) 
\end{align*}  is continuous with respect to the product topology. Thus, a \emph{dynamical system} is a pair $(K,(\varphi_t)_{t\geq 0})$ consisting of a compact Hausdorff space $K$ and a continuous semiflow $(\varphi_t)_{t\geq 0}$. We also call $K$ the \emph{state space} and the elements $x\in K$ \emph{states}. The induced Koopman semigroup $(T(t))_{t\geq 0}$ on $(\mathrm{C}(K),\|\cdot\|_\infty)$, as defined above, is strongly continuous if and only if $(\varphi_t)_{t\geq 0}$ is continuous, cf. \cite[B-II, Lem. 3.2]{lecture}.\\

We recall that $(\mathrm{C}(K),\|\cdot\|_\infty)$ is a C$^*$-algebra and a Banach lattice for the usual pointwise operations. Given a function $f\colon K\to \mathbb{R}$ and $a\in \mathbb{R}$ we use the notation 
\[ [f<a]:= f^{-1}((-\infty,a))\, , \ [f\leq a]:=f^{-1}((-\infty,a])\, , \ [f=a]:=f^{-1}(\{a\}) \] and, analoguously, $[f>a]$ and $[f\geq a]$.
The sets $[|f|> 0]$, $f\in\mathrm{C}(K)$, form a basis for the topology on $K$ since $K$ is completely regular, \cite[Appendix A.2]{erg} and \cite[Proof of Lem. 4.12]{erg}. This is equivalent to the fact that the zero sets $[f=0]$, $f\in \mathrm{C}(K)$, form a basis of the closed subsets of $K$ or that the topology on $K$ coincides with the initial topology induced by $\mathrm{C}(K)$. Combining these facts, given a closed subset $M\subseteq K$ and $U$ an open neighborhood of $M$ there exists $f\in\mathrm{C}(K)$ with $M\subseteq [f=0]$ and $\varepsilon >0$ such that $[|f|<\varepsilon]\subseteq U$, i.e. the sets of the form $U_{\varepsilon,f}:=[|f|<\varepsilon]$, $f\in \mathrm{C}(K)$, $f(M)=0$ and $\varepsilon>0$ form a basis for the system of neighborhoods of $M$ which we will denote by $\mathcal{U}(M)$.\\

As already mentioned above, for every closed ideal $I\subseteq \mathrm{C}(K)$ there exists a closed subset $M\subseteq K$ such that
\[I=I_M=\{f\in \mathrm{C}(K) \mid f|_M\equiv 0 \} \textrm{ and }M=\bigcap\limits_{f\in I}[f=0] \, , \]  see \cite[Thm. 4.8]{erg}. The closed set $M$ is said to be $(\varphi_t)_{t\geq 0}$-invariant if $\varphi_t(M)\subseteq M$ for all $t\geq 0$. It is $(\varphi_t)_{t\geq 0}$ invariant if and only if the corresponding ideal $I_M$ is $(T(t))_{t\geq 0}$-invariant, \cite[Lem. 4.18]{erg}. We remark that for $M\subseteq K$ closed, $I_M\cong \mathrm{C}_0(K\setminus M)$ by $f\mapsto f|_{K\setminus M}$, where $\mathrm{C}_0(K\setminus M)$ is the space of all real-valued continuous functions on $K\setminus M$ that vanish at infinity. We identify the dual spaces $\mathrm{C}(K)'$ and $I_M' \cong \mathrm{C}_0(K\setminus M)'$ of $\mathrm{C}(K)$ and $I_M$ with the regular Borel measures on $K$ and $K\setminus M$ respectively. For a subset $M\subseteq K$ we either write $M^c$ or $K\setminus M$ for its complement in $K$.\\

\section{Stability of $C_0$-semigroups}
In this section we recall various stability properties of $C_0$-semigroups on a Banach space $X$. The concept of \emph{almost weak stability} is treated in more detail.
\begin{defi}\label{stability}Let $(T(t))_{t\geq0}$ be a strongly continuous semigroup on a Banach space $X$. Then $(T(t))_{t\geq 0}$ is said to be
\begin{itemize}
\item[a)] \emph{nilpotent} if there exists $t_0>0$ such that
\[\|T(t_0)\|=0\, ,\]
\item[b)] \emph{uniformly exponentially stable} if there exists $\delta>0$ such that 
\[\lim\limits_{t\to\infty}e^{-\delta}\|T(t)\|=0 \, ,\]
\item[c)] \emph{uniformly stable} if
\[\lim\limits_{t\to\infty}\|T(t)\|=0 \, , \]
\item[d)] \emph{strongly stable} if
\[\lim\limits_{t\to\infty}\|T(t)x\|=0 \textrm{ for all }x\in X \, ,\]
\item[e)]  \emph{weakly stable} if
\[\lim\limits_{t\to\infty}\langle T(t)x,x'\rangle=0  \textrm{ for all }x\in X\, , \ x' \in X' \textrm{ and}  \]
\item[f)] \emph{almost weakly stable} if for all pairs $(x,x') \in X\times X'$ there exists a subset $R\subset \mathbb{R}_+$ with density \footnote{The density of a subset $R\subset \mathbb{R}_+$ is 
\[d(R):=\lim\limits_{t\to \infty} \frac{1}{t} \lambda\left([0,t]\cap R\right),\ \lambda \textrm{ Lebesgue measure, }  \] if the limit exists.} $1$ such that
\[\lim\limits_{t\to\infty, t\in R}\langle T(t)x,x'\rangle=0 \, .  \]
\end{itemize}

\end{defi}
In the above definition the following chain of implications holds\[ {\normalfont{a)\implies b)\implies c)\implies d)\implies e) \implies f)}}\, . \] All implications are strict except {\normalfont{b)$\iff$c)}} which can be found in \cite[Chapter V, Section 1]{engnag}. For examples we refer to \cite[Chapter III]{tanja} and \cite[Chapter V, Section 1]{engnag}. \\[1em]
To later study stability of Koopman semigroups on $\mathrm{C}(K)$-spaces we need an additional definition.
\begin{defi}Let $(K,(\varphi_t)_{t\geq 0})$ be a dynamical system, $(T(t))_{t\geq 0}$ the corresponding Koopman semigroup on $\mathrm{C}(K)$. Then $(T(t))_{t\geq 0}$ is said to be \emph{almost everywhere pointwise stable} if there exist a quasi invariant\footnote{A measure $\mu$ on $K$ is called \emph{quasi invariant with respect to $(\varphi_t)_{t\geq 0}$} if $\mu(\varphi_t^{-1}(N))=0$ for all $t\geq 0$ if and only if $\mu(N)=0$.} regular Borel measure on $K$ such that for every $f\in \mathrm{C}(K)$
\[T(t)f(x)\to 0 \textrm{ as }t\to \infty \textrm{ for $\mu$-almost all }x\in K\, . \]
\end{defi}
We also recall the definition of the \emph{growth bound} $\omega_0$ of a strongly continuous semigroup $(T(t))_{t\geq 0}$ on a Banach space $X$ which is defined as 
\[\omega_0:=\inf\{\omega \in \mathbb{R}\mid \|T(t)\|\leq Me^{\omega t}\textrm{ for all }t\geq 0\textrm{ and suitable }M>0 \} \, . \]

\subsection{Almost Weak Stability}
For a complete treatment of almost weak stability for $C_0$-semigroups on Banach spaces with relatively weakly compact orbits we refer to \cite[Chapter III, Section 5]{tanja} and, for a time-discrete variant to \cite[Chapter 9]{erg}. The tools and ideas used in this subsection are based on \cite[Chapter 9]{erg}.
\begin{prop}\label{fss}Let $(T(t))_{t\geq 0}$ be $C_0$-semigroup of contractions on some Banach space $X$. Then the following are equivalent
\begin{itemize}
\item[a)] $(T(t))_{t\geq 0}$ is almost weakly stable,
\item[b)] \[\lim\limits_{T\to \infty} \frac{1}{T}\int\limits_0^T \! |\langle T(t)x,x'\rangle|\, \mathrm{d}t=0 \] for all $x\in X$, $x'\in X'$,
\item[c)] \[\lim\limits_{T\to \infty} \sup\limits_{x'\in X', \|x'\|\leq 1}\frac{1}{T}\int\limits_0^T \! |\langle T(t)x,x'\rangle|\, \mathrm{d}t=0 \] for all $x\in X$.
\end{itemize}
\end{prop}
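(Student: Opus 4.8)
The plan is to reduce everything to one scalar device and then confront a single genuinely analytic step. Since the semigroup consists of contractions, for every pair $(x,x')$ the function $g_{x,x'}(t):=\langle T(t)x,x'\rangle$ is continuous and bounded by $\|x\|\,\|x'\|$, so all the Cesàro averages in b) and c) are well defined. First I would record the Koopman--von Neumann equidistribution lemma: for a bounded measurable $g\colon\R_+\to\CC$ one has $\lim_{T\to\infty}\frac1T\int_0^T|g(t)|\,\dd t=0$ precisely when there is a set $R\subseteq\R_+$ of density $1$ along which $g(t)\to0$. Applying this to $g=g_{x,x'}$ for each fixed pair turns the density-$1$ convergence in the definition of almost weak stability into the Cesàro statement b) and back, so a)$\iff$b) drops out pair by pair with no further work. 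The implication c)$\implies$b) is immediate, since for $\|x'\|\le1$ the integrand is dominated by the supremum in c), and the case of general $x'$ follows after normalising. Everything therefore rests on the single implication b)$\implies$c).

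For that I would first rewrite the supremum in c). Expressing the modulus through measurable weights $|h(t)|\le1$ and moving the resulting Bochner integral inside the duality gives
\[
\sup_{\|x'\|\le1}\frac1T\int_0^T|\langle T(t)x,x'\rangle|\,\dd t=\sup_{|h|\le1}\left\|\frac1T\int_0^T h(t)\,T(t)x\,\dd t\right\|.
\]
Write $y_{h,T}:=\frac1T\int_0^T h(t)T(t)x\,\dd t$. Condition b) says exactly that $\langle y_{h,T},x'\rangle\to0$ as $T\to\infty$, uniformly over $|h|\le1$, for each fixed $x'$; condition c) demands instead that $\|y_{h,T}\|\to0$ uniformly over $|h|\le1$. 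In other words, b) is the uniform \emph{weak} nullity of the averaging set $\{y_{h,T}\}$ and c) is its \emph{norm} nullity, and this weak-versus-norm gap is exactly where the difficulty lies: it cannot be closed by compactness of the unit ball of $X'$, which is only weak$^*$-compact, nor by pointwise convergence alone.

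To close it I would pass to second order. Cauchy--Schwarz gives $\bigl(\frac1T\int_0^T|\langle T(t)x,x'\rangle|\,\dd t\bigr)^2\le\frac1T\int_0^T|\langle T(t)x,x'\rangle|^2\,\dd t$, and the new integrand is a matrix coefficient of the product semigroup $T(t)\otimes T(t)$ evaluated at $x\otimes x$ against $x'\otimes x'$; on a $\mathrm{C}(K)$-space this product semigroup is simply the Koopman semigroup of $\varphi_t\times\varphi_t$ on $\mathrm{C}(K\times K)$, again a contraction $C_0$-semigroup. In the model case of a unitary group on a Hilbert space the estimate then collapses without any remaining supremum: using $\langle T(t)x,T(s)x\rangle=\langle T(t-s)x,x\rangle$ together with the crude bound $|h(t)\overline{h(s)}|\le1$ one gets $\|y_{h,T}\|^2\le\frac2T\int_0^T|\langle T(\tau)x,x\rangle|\,\dd\tau$, whose right-hand side tends to $0$ by b) applied to the single autocorrelation pair. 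This is the mechanism I would try to reproduce in general.

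The hard part, and the step I expect to be the real obstacle, is to run this argument when $X$ is a general Banach space, where there is no inner product to eliminate the supremum over $x'$. Here one must realise the product semigroup on a suitable tensor completion, verify that it is again a contraction $C_0$-semigroup, and invoke relative weak compactness of the orbits in order to apply the Jacobs--de Leeuw--Glicksberg machinery behind the cited results, so that the uniform norm decay of the averages $y_{h,T}$ follows from the scalar Cesàro decay supplied by b). Verifying precisely this weak-to-norm upgrade---rather than the bookkeeping around the Koopman--von Neumann lemma---is where I would expect to spend essentially all of the effort.
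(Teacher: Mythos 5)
Your reductions a)$\iff$b) (via the Koopman--von Neumann lemma) and c)$\implies$b) are correct and agree with the paper, and your rewriting of the supremum in c) as $\sup_{|h|\le 1}\|y_{h,T}\|$ correctly locates the difficulty as a weak-to-norm upgrade for Ces\`aro averages. But the implication b)$\implies$c), which you yourself identify as the crux, is not actually proved: you carry it out only for a unitary group on a Hilbert space, where the inner product collapses $\|y_{h,T}\|^2$ into a single autocorrelation average, and for a general Banach space you offer only a plan (product semigroup on a tensor completion plus Jacobs--de Leeuw--Glicksberg machinery). That plan fails on its own terms. First, the JdLG apparatus requires relatively weakly compact orbits, which is precisely the hypothesis this proposition is designed to avoid --- the remark following \Cref{ja} stresses that these results are being proved \emph{without} that assumption --- and nothing in the statement provides weak compactness for tensorized orbits either. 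Second, the Cauchy--Schwarz step only trades the first-order average paired with an arbitrary $x'$ for a second-order average paired with $x'\otimes x'$; outside the Hilbert-space setting the supremum over the dual ball has not been eliminated, the same problem simply reappears one tensor level up. So the key implication is a genuine gap.

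The gap can be closed without any compactness of orbits, by the device the paper adapts from Jones and Lin. Let $B'$ be the dual unit ball with the weak$^*$ topology (compact by Banach--Alaoglu), let $\tilde{T}(t)f(x'):=f(T(t)'x')$ be the induced Koopman semigroup on $\mathrm{C}(B')$, and put $g_x(x'):=|\langle x,x'\rangle|$. Then for every $T>0$
\[
\sup_{\|x'\|\le 1}\frac{1}{T}\int_0^T |\langle T(t)x,x'\rangle|\, \mathrm{d}t
=\Bigl\|\frac{1}{T}\int_0^T \tilde{T}(t)g_x\, \mathrm{d}t\Bigr\|_{\mathrm{C}(B')},
\]
so c) is exactly norm convergence to $0$ of the Ces\`aro averages of $g_x$ in $\mathrm{C}(B')$. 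Hypothesis b) says these averages converge to $0$ pointwise on $B'$; since they are uniformly bounded by $\|x\|$, dominated convergence (integrating against regular Borel measures on $B'$) upgrades this to weak convergence to $0$ in $\mathrm{C}(B')$; and then the mean ergodic theorem --- weakly convergent Ces\`aro averages of a bounded semigroup converge in norm, because the averages lie in closed convex hulls of the orbit and weak and norm closures of convex sets coincide --- gives the norm convergence. This is exactly the weak-to-norm step your proposal was missing: it is supplied by the convexity of Ces\`aro averages, not by compactness of orbits.
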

\begin{proof}
The equivalence {\normalfont{a)$\iff$b)}} follows from the so called \emph{Koopman-von Neumann Lemma}, see for example \cite[Chapter III, Lemma 5.2]{tanja}. The implication {\normalfont{b)$\implies$c)}} in the time discrete analogue is due to \emph{Jones and Lin}, cf. \cite{joneslin}. We adapt the proof given in \cite[Prop. 9.17]{erg}. Every operator $T(t)$ as its adjoint $T(t)'$ is a contraction and the dual unit ball $B'$ is compact with respect to the weak-* topology. Due to these facts we can define the Koopman system 
\[(\C(B'),(\tilde{T}(t))_{t\geq 0})\, \] where 
\[\tilde{T}(t)f(x'):=f(T(t)'x') \, \] for $t\geq 0$, $f\in \C(B')$, $x'\in B'$. Fix $x\in X$ and define $g_x\in \C(B')$ by $g_x(x'):=|\langle x, x' \rangle|$. By {\normalfont{b)}}  \[\lim\limits_{T\to\infty} \frac{1}{T}\int\limits_0^T \! \tilde{T}(t)g_x(x')\, \mathrm{d}t=0 \] pointwise in $x'$ and by Lebesgue's theorem of dominated convergence also weakly and thus in the norm of $\C(B')$, cf. \cite[Chapter I, Theorem 2.25]{tanja} or \cite[Proposition 8.18]{erg}.
\end{proof}
\begin{rem}\label{awsi}Let $(T(t))_{t\geq 0}$ be $C_0$-semigroup of contractions on some Banach space $X$. The subset 
\[I_{\textrm{aws}}:=\{x \in X \mid \lim\limits_{T\to \infty} \sup\limits_{x'\in X', \|x'\|\leq 1}\frac{1}{T}\int\limits_0^T \! |\langle T(t)x,x'\rangle|\, \mathrm{d}t=0\} \]
is a closed, $(T(t))_{t\geq 0}$-invariant subspace of $X$.
\end{rem}
\begin{proof} Let $(x_n)_{n\in\mathbb{N}}$ be a convergent sequence in $I_{\textrm{aws}}$ with limit $x\in X$ and take $\varepsilon >0$. Then there exists $n_0\in \mathbb{N}$ such that $\|x_n-x\|< \frac{\varepsilon}{2}$ for all $n\geq n_0$. Now consider $j\geq n_0$. By \Cref{fss} {\normalfont{c)}} there exists $t(j)\geq0$ such that 
\[ \sup\limits_{x'\in X', \|x'\|\leq 1}\frac{1}{T}\int\limits_0^T \! |\langle T(t)x_j,x'\rangle|\, \mathrm{d}t < \frac{\varepsilon}{2} \] for all $T>t(j)$. This implies
\begin{align*}&\sup\limits_{x'\in X', \|x'\|\leq 1}\frac{1}{T}\int\limits_0^T \! |\langle T(t)x,x'\rangle|\, \mathrm{d}t \\ = &\sup\limits_{x'\in X', \|x'\|\leq 1}\frac{1}{T}\int\limits_0^T \! |\langle T(t)(x_j-x),x'\rangle|\, \mathrm{d}t + \sup\limits_{x'\in X', \|x'\|\leq 1}\frac{1}{T}\int\limits_0^T \! |\langle T(t)x_j,x'\rangle|\, \mathrm{d}t\\
\leq &\|x-x_j\| +\sup\limits_{x'\in X', \|x'\|\leq 1}\frac{1}{T}\int\limits_0^T \! |\langle T(t)x_j,x'\rangle|\, \mathrm{d}t <\varepsilon \textrm{ for all }T\geq \max\{n_0,t(j)\} \, . 
\end{align*}
\end{proof}
\begin{rem}Let $(T(t))_{t\geq 0}$ be $C_0$-semigroup of contractions on some Banach space $X$. In analogy to the previous remark, we define \begin{align*}I_{\textrm{ss}}&:=\{x\in X\mid \|T(t)x\|\to 0 \textrm{ as }t\to \infty \} \textrm{ and } \\
I_{\textrm{ws}}&:=\{x\in X \mid \langle T(t)x, x'\rangle \to 0 \textrm{ as }t\to \infty \textrm{ for all }x'\in X' \} \, . \end{align*} Both are clearly closed subspaces of $X$.
\end{rem}
\begin{rem}Let $(K,(\varphi_t)_{t\geq 0})$ be a dynamical system, $(T(t))_{t\geq 0}$ the corresponding Koopman semigroup on $X:=\mathrm{C}(K)$ and $\mu$ a quasi invariant regular Borel measure on $K$. We define
\[I_{\textrm{aews}}:=\{f\in X \mid T(t)f(x)\to 0\textrm{ as }t\to \infty \ \textrm{ for $\mu$-almost all }x\in K\}\, . \] Which is a closed subspace of $X$. 
\end{rem}
\begin{prop}\label{ja} Let $(T(t))_{t\geq 0}$ be a $C_0$-semigroup of contractions on a Banach space $X$. If for all $x\in X$ there exists a sequence $(t_n)_{n\in \mathbb{N}}$ in $[0,\infty)$ with $t_n\to \infty$ as $n \to \infty$ such that for all $x'\in X'$ 
\[\lim\limits_{n\to\infty} \langle T(t_n)x,x'\rangle =0 \, ,  \] then $(T(t))_{t\geq 0}$ is almost weakly stable.
\end{prop}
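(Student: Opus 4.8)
The plan is to verify condition \textnormal{(b)} of \Cref{fss}, that is, to show
\[\lim_{T\to\infty}\frac1T\int_0^T|\langle T(t)x,x'\rangle|\,\mathrm{d}t=0\]
for all $x\in X$ and $x'\in X'$; by \Cref{fss} this already yields almost weak stability, and crucially it requires convergence only pointwise in $x'$, not uniformly. By homogeneity in $x'$ it suffices to take $x'\in B'$, the weak-* compact dual unit ball. I would reuse the Koopman system $(\C(B'),(\tilde T(t))_{t\geq0})$ from the proof of \Cref{fss} together with the positive function $g_x\in\C(B')$, $g_x(y'):=|\langle x,y'\rangle|$, for which $\tilde T(t)g_x(y')=|\langle T(t)x,y'\rangle|$. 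Fixing $x$ and $x'$, I encode the Cesàro average as an integral: since $t\mapsto T(t)'x'$ is weak-* continuous, the functional $f\mapsto\frac1T\int_0^T f(T(t)'x')\,\mathrm{d}t$ is well defined, positive, bounded by $\|f\|_\infty$ and sends $\e$ to $1$, so by the Riesz representation theorem it is integration against a probability measure $\nu_{x',T}$ on $B'$, and $\int g_x\,\mathrm{d}\nu_{x',T}=\frac1T\int_0^T|\langle T(t)x,x'\rangle|\,\mathrm{d}t$.

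The next step is a Krylov--Bogolyubov argument. As the probability measures on $B'$ form a weak-* compact set, the net $(\nu_{x',T})_T$ has weak-* cluster points, and the estimate $|\int\tilde T(s)f\,\mathrm{d}\nu_{x',T}-\int f\,\mathrm{d}\nu_{x',T}|\leq\frac{2s}{T}\|f\|_\infty$ shows that every such cluster point $\nu$ is invariant under the adjoint semigroup, i.e. $\int\tilde T(s)f\,\mathrm{d}\nu=\int f\,\mathrm{d}\nu$ for all $s\geq0$ and $f\in\C(B')$. I would then reduce the claim to the assertion that \emph{every} invariant probability measure $\nu$ on $B'$ satisfies $\int g_x\,\mathrm{d}\nu=0$. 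Granting this, let $\beta:=\limsup_{T\to\infty}\int g_x\,\mathrm{d}\nu_{x',T}$ and pick $T_k\to\infty$ realizing it; passing to a weak-* convergent subnet $\nu_{x',T_k}\to\nu$ gives $\beta=\int g_x\,\mathrm{d}\nu=0$, and since the integrals are nonnegative the full limit is $0$.

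The heart of the matter is this last reduction, and it is exactly where the hypothesis is used. Let $\nu$ be an invariant probability measure and let $(t_n)$ be the sequence associated to $x$, so that $T(t_n)x\to0$ weakly. By invariance,
\[\int g_x\,\mathrm{d}\nu=\int g_x(T(t_n)'y')\,\mathrm{d}\nu(y')=\int|\langle T(t_n)x,y'\rangle|\,\mathrm{d}\nu(y')\]
for every $n$. The integrand tends to $0$ pointwise in $y'$ (this is precisely the weak convergence $T(t_n)x\to0$) and is dominated by $\|x\|$, since the $T(t)$ are contractions and $\|y'\|\leq1$; Lebesgue's dominated convergence theorem then gives $\int g_x\,\mathrm{d}\nu=0$, as required. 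I expect the main obstacle to be conceptual rather than computational: the temptation is to try to prove mean ergodicity of $\tilde T$ on the non-reflexive space $\C(B')$, which need not hold. The resolution is to observe that only the pointwise condition \textnormal{(b)} of \Cref{fss} is needed, which reduces the problem to annihilating $g_x$ against invariant measures, and there the weak limit along $(t_n)$ can be transported by invariance and then eliminated by dominated convergence, sidestepping mean ergodicity entirely.
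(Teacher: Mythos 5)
Your proof is correct, and while it shares the paper's core devices, it reaches the conclusion by a genuinely different mechanism. Both proofs reduce to \Cref{fss}, lift to the Koopman system $(\C(B'),(\tilde{T}(t))_{t\geq 0})$, introduce $g_x(y'):=|\langle x,y'\rangle|$, and hinge on the identical decisive computation: transport the integral of $g_x$ along $(t_n)$ by invariance and kill it with dominated convergence. The difference is how "$g_x$ is annihilated by invariant functionals" gets converted into Cesàro decay. The paper works on the primal side: it shows that every $\mu\in\C(B')'$ vanishing on $\bigcup_{t\geq 0}\mathrm{rg}(\mathrm{Id}-\tilde{T}(t))$ (these are precisely the invariant signed measures, so this step is your key lemma in disguise) annihilates $g_x$, concludes via Hahn--Banach that $g_x\in\overline{\mathrm{lin}\bigcup_{t\geq 0}(\mathrm{rg}(\mathrm{Id}-\tilde{T}(t)))}$, and then uses the telescoping-plus-density argument on that subspace to get $\frac{1}{T}\int_0^T\tilde{T}(t)g_x\,\mathrm{d}t\to 0$ in the norm of $\C(B')$, i.e.\ condition c) of \Cref{fss}, uniformly in $x'$. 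You work on the dual side: a Krylov--Bogolyubov compactness argument manufactures invariant probability measures as weak-* cluster points of the Ces\`aro averages $\nu_{x',T}$ of Dirac measures along the dual orbit, your key lemma shows these annihilate $g_x$, and a limsup/subnet argument yields condition b), pointwise in $x'$. The paper's route buys the uniform statement c) directly, with the mean ergodic splitting doing the work; your route buys self-containedness --- no Hahn--Banach, no appeal to the splitting theorem --- at the cost of falling back on the implication b)$\Rightarrow$c) of \Cref{fss} (valid for contraction semigroups) to recover uniformity. Your closing remark is also on target: neither argument needs, nor could generally assert, mean ergodicity of $\tilde{T}$ on the non-reflexive space $\C(B')$; the paper only ever uses convergence of Ces\`aro averages on the closed span of the coboundaries, which always holds.
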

\begin{proof}Take $x\in X$ and $(t_n)_{n\in \mathbb{N}}$, $t_n\to\infty$ such that 
\[\lim\limits_{n\to\infty} \langle T(t_n)x,x'\rangle =0 \,  \] for all $x'\in X'$. As in the proof of \Cref{fss} we consider the induced Koopman system $(\C(B'),(\tilde{T}(t))_{t\geq 0})$ and the function 
\[g_x(x'):=|\langle x,x'\rangle| \, . \] 
If $\mu\in \C(B')'$ vanishes on $\bigcup\limits_{t\geq 0}(\mathrm{rg}(\mathrm{Id}-\tilde{T}(t)))$, then 
\begin{align*}\langle g_x,\mu \rangle = \langle \tilde{T}(t_n)g_x, \mu\rangle
\end{align*} for all $n\in \mathbb{N}$. We observe that
\begin{align*}\langle \tilde{T}(t_n)g_x,\mu \rangle &= \int_{B'}\! \tilde{T}(t_n)g_x(x')\, \mathrm{d}\mu(x')\\
&= \int_{B'}\! |\langle x, T(t_n)'x'\rangle | \, \mathrm{d}\mu(x')\\
&= \int_{B'}\! |\langle T(t_n) x, x'\rangle | \, \mathrm{d}\mu(x')\, .
\end{align*}
The functions $|\langle T(t_n) x, x'\rangle | $ converge to $0$ pointwise in $x'$ by assumption and by Lebesgue's Theorem the integral $\int_{B'}\! |\langle T(t_n) x, 'x'\rangle | \, \mathrm{d}\mu(x')$ goes to $0$ as well. This implies $g_x\in \overline{\mathrm{lin}\bigcup\limits_{t\geq 0}(\mathrm{rg}(\mathrm{Id}-\tilde{T}(t)))}$ by the theorem of Hahn-Banach. Thus,
\begin{align*}&\frac{1}{T}\int_0^T\! |\langle T(t)x,x'\rangle| \, \mathrm{d}t \\
=&\frac{1}{T}\int_0^T\! (\tilde{T}(t)g_x)(x') \, \mathrm{d}t \xrightarrow[]{T\to\infty} 0
\end{align*} for all $x'\in X'$. Since $x$ was arbitrary, $(T(t))_{t\geq 0}$ is almost weakly stable.
\end{proof}
\begin{rem}For $C_0$-semigroups of contractions with relatively weakly compact orbits the assertions in \Cref{ja} are equivalent, see \cite[Chapter III, Section 5]{tanja}. Here we are able to prove one implication without assuming relatively weakly compact orbits. It still remains open if the opposite implication also holds true in this case.
\end{rem}
From now on $(K,(\varphi_t)_{t\geq 0})$ is a topological dynamical system with compact state space $K$ if not otherwise stated and $(T(t))_{t\geq 0}$ denotes the induced Koopman semigroup on $\mathrm{C}(K)$.
\section{Absorbing sets}
The following section is dedicated to \emph{absorbing sets}, these are compact invariant subsets of the state space that eventually contain every initial state. We follow the definition in \cite[Def. 2.1.1]{chueshov} and differ between two types of such sets.
\begin{defi}A closed invariant set $ M \subsetneq K$ is called
\begin{itemize}
\item[a)]\emph{absorbing} if there exists $t_0>0$ such that
\[\varphi_{t_0}(K)\subseteq M \, , \]
\item[b)]\emph{pointwise absorbing} if for all $x\in K$ there exists $t_0>0$ such that
\[\varphi_{t_0}(x)\in M \, . \]
\end{itemize}
\end{defi}
This gives rise to the notion of \emph{dissipative systems}, cf. \cite[Def. 2.1.1]{chueshov}.
\begin{defi} A dynamical system $(K, (\varphi_t)_{t\geq 0})$ is called \emph{(point) dissipative} if it contains a (point) absorbing set. 
\end{defi}

\begin{prop}Let $M \subsetneq K$ be a closed invariant set and $(S(t))_{t\geq 0}$ the restricted Koopman semigroup, i.e. $S(t):=T(t)|_{I_M}$ for $t\geq 0$. Then all the assertions in {\normalfont{(I)}} and all the assertions in {\normalfont(II)} are equivalent.
\begin{itemize}
\item[(I)]
\begin{itemize}
\item[a)] $(S(t))_{t\geq 0}$ is nilpotent.
\item[b)] $(S(t))_{t\geq 0}$ is uniformly stable.
\item[c)] $\omega_0=-\infty$.
\item[d)] $M$ is absorbing.
\end{itemize}
\item[(II)]
\begin{itemize}
\item[a)] For all Dirac measures $\delta_x\in C(K)'$ there exists $t_0>0$ such that 
\[S(t_0)'\delta_x=0 \, . \]
\item[b)] $M$ is pointwise absorbing.
\end{itemize}
\end{itemize}
\end{prop}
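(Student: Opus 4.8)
The plan is to reduce both groups of equivalences to two explicit computations for the restricted semigroup $(S(t))_{t\geq 0}$ on $I_M\cong \mathrm{C}_0(K\setminus M)$: the operator norm $\|S(t)\|$ for part (I), and the action of the adjoint $S(t)'$ on Dirac functionals for part (II). The bridge between the dynamics and the operators throughout is the correspondence $M=\bigcap_{f\in I_M}[f=0]$ recalled above.

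First I would record that for $f\in I_M$ and $t\geq 0$ one has $\|S(t)f\|_\infty=\sup_{y\in\varphi_t(K)}|f(y)|$, which is immediate from $S(t)f=f\circ\varphi_t$. From this I claim the dichotomy $\|S(t)\|\in\{0,1\}$. Indeed $\|S(t)\|\leq 1$ since $\varphi_t(K)\subseteq K$. If $\varphi_t(K)\subseteq M$ then every $f\in I_M$ vanishes on $\varphi_t(K)$, so $S(t)=0$; conversely, if $\varphi_t(K)\not\subseteq M$, I pick $y_0\in\varphi_t(K)\setminus M$ and use complete regularity (Urysohn's lemma for the disjoint closed sets $M$ and $\{y_0\}$) to produce $f\in I_M$ with $0\leq f\leq 1$ and $f(y_0)=1$, whence $\|S(t)f\|_\infty\geq f(y_0)=1$ and so $\|S(t)\|=1$. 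This identifies $\|S(t)\|=0$ precisely with $\varphi_t(K)\subseteq M$. I expect this norm dichotomy to be the crux of the argument; once it is in place, the rest of (I) is formal.

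With the dichotomy established, (I) follows quickly. For a)$\Leftrightarrow$d): if $M$ is absorbing, say $\varphi_{t_0}(K)\subseteq M$, then $S(t_0)=0$ and $(S(t))_{t\geq 0}$ is nilpotent; conversely $S(t_0)=0$ forces every $f\in I_M$ to vanish on the compact set $\varphi_{t_0}(K)$, hence $\varphi_{t_0}(K)\subseteq\bigcap_{f\in I_M}[f=0]=M$. The implications a)$\Rightarrow$b) and a)$\Rightarrow$c) are trivial, since $S(t)=0$ for all $t\geq t_0$ by the semigroup property. For b)$\Rightarrow$a) and c)$\Rightarrow$a) I would combine the dichotomy with the growth bound formula $\omega_0=\inf_{t>0}\tfrac1t\log\|S(t)\|$: each value $\tfrac1t\log\|S(t)\|$ is either $0$ or $-\infty$, so both $\lim_{t\to\infty}\|S(t)\|=0$ and $\omega_0=-\infty$ force the existence of some $t_0>0$ with $\|S(t_0)\|=0$, i.e. nilpotency.

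For (II) I would compute the adjoint on a Dirac functional: viewing $\delta_x$ as an element of $I_M'$ by restriction, one has $\langle f,S(t)'\delta_x\rangle=\langle S(t)f,\delta_x\rangle=f(\varphi_t(x))$ for all $f\in I_M$. Hence $S(t)'\delta_x=0$ if and only if $f(\varphi_t(x))=0$ for every $f\in I_M$, which by the correspondence $M=\bigcap_{f\in I_M}[f=0]$ is equivalent to $\varphi_t(x)\in M$. Quantifying over all $x\in K$ turns assertion a) into exactly the pointwise absorbing condition b), completing the proof.
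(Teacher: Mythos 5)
Your proof is correct, and its core ingredients are the same as the paper's: a Urysohn-type function vanishing on $M$ and peaking at a point of $\varphi_t(K)\setminus M$, plus the correspondence $M=\bigcap_{f\in I_M}[f=0]$, and for (II) the computation $\langle S(t)f,\delta_x\rangle=f(\varphi_t(x))$. The difference is organizational, and it buys you something: the paper proves a)$\Rightarrow$b), then b)$\Rightarrow$d) by contradiction (failure of absorption at every $t_0$ yields $\|S(t_0)\|=1$), then d)$\Rightarrow$a), and finally only remarks that a)$\Rightarrow$c), so the written proof never derives anything \emph{from} c) and the cycle through c) is left implicitly to standard semigroup theory. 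You instead isolate the dichotomy $\|S(t)\|\in\{0,1\}$, with $\|S(t)\|=0$ if and only if $\varphi_t(K)\subseteq M$, after which all of (I) is formal; in particular you get c)$\Rightarrow$a) via $\omega_0=\inf_{t>0}\tfrac{1}{t}\log\|S(t)\|$. (You could even avoid citing that formula: by the paper's definition of $\omega_0$, $\omega_0=-\infty$ gives $\|S(t)\|\leq Me^{-t}<1$ for large $t$, hence $\|S(t)\|=0$ by the dichotomy.) So your write-up closes a loop the paper leaves open, at the cost of one extra standard fact about growth bounds. Part (II) is handled essentially identically in both arguments.
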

\begin{proof} We begin with the proof of {\normalfont{(I)}}. Clearly, {\normalfont{a)$\implies$b)}}. For the implication {\normalfont{b)$\implies $d)}} assume $M$ not to be absorbing and fix $t_0>0$, thus there is $x_0\in K\setminus M$ with $\varphi_{t_0}(x_0)\in K\setminus M$. Since $K\setminus M$ is completely regular there exists $f\in \C_0(K\setminus M)\cong I_M$ with $\|f\|=1$ and $f(\varphi_{t_0}(x_0))=1$. Therefore, 
\[\|S(t_0)\|\geq \|S(t_0)f\| \geq S(t_0)f(x_0)=1 \, .\]
Since $t_0$ was arbitrary $\|S(t)\|= 1$ for all $t\geq 0$ which contradicts {\normalfont{b)}}. 
The implication {\normalfont{d)$\implies$a)}} can be seen as follows. Let $t_0>0$ be such that $\varphi_{t_0}(K)\subseteq M$, thus $S(t_0)f(x)=f(\varphi_{t_0}(x))=0$ for every $f\in I_M$. This implies $\|S(t_0)\|=\sup\limits_{\|f\|\leq 1}\|S(t_0)f\|=0$.
Additionally, clearly {\normalfont{a)}} implies {\normalfont{c)}}.\\[1em]
Proof of {\normalfont{(II)}}: These equivalences are quite clear since {\normalfont{a)}} implies that for all $x\in K$ there exists $t_0>0$ such that 
\[\varphi_{t_0}(x)\in \bigcap\limits_{f\ in I_M}[f=0]=M \, . \]

\end{proof}
Next we give a condition under which the two concepts coincide. We recall that a compact space $K$ is a Baire space, thus for a sequence of closed subsets $K_n$, $n\in\mathbb{N}$, with 
\[K =\bigcup\limits_{n\in\mathbb{N}} K_n\] there exists $n\in \mathbb{N}$ such that $K_n$ has non-empty interior. \\

\begin{rem}\label{Kn}Let $ M \subsetneq K$ be closed and invariant. If $M$ is pointwise absorbing the sets $\varphi_n^{-1}(M)=:K_n$ form a closed cover of $K$.
\end{rem}

\begin{prop}Let $ M \subsetneq K$ be closed and invariant and $K_n$ as defined in \Cref{Kn}. The set $M$ is absorbing if and only if it is pointwise absorbing and $M\subset \overset{\circ}{K_n}$ for some $n\in \mathbb{N}$.
\end{prop}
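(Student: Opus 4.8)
The plan is to prove both implications, the reverse one (pointwise absorbing together with the interior condition $\Rightarrow$ absorbing) being the substantial part. Throughout I will use two consequences of invariance. First, since $\varphi_s(M)\subseteq M$, once an orbit enters $M$ it stays there, so $\varphi_m(x)\in M$ forces $\varphi_{m'}(x)\in M$ for every $m'\geq m$; hence the sets $K_m=\varphi_m^{-1}(M)$ are increasing, $K_1\subseteq K_2\subseteq\cdots$. Second, as recorded in \Cref{Kn}, pointwise absorption gives the closed cover $K=\bigcup_{m}K_m$, and combining this with monotonicity yields $\bigcup_{k}K_{kn}=\bigcup_m K_m=K$ for every fixed $n$. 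For the easy direction, suppose $M$ is absorbing, say $\varphi_{t_0}(K)\subseteq M$. Then $M$ is trivially pointwise absorbing, and choosing an integer $n\geq t_0$ gives $\varphi_n(K)=\varphi_{n-t_0}(\varphi_{t_0}(K))\subseteq M$ by invariance, so $K_n=\varphi_n^{-1}(M)=K$ and therefore $M\subseteq K=\overset{\circ}{K_n}$.

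For the reverse direction, assume $M$ is pointwise absorbing and $M\subseteq\overset{\circ}{K_n}=:U$ for some $n$, where $U$ is open with $U\subseteq K_n$. The key observation is that $U$ is forward invariant under $\varphi_n$ in the sense that $U\subseteq\varphi_n^{-1}(U)$: indeed, if $x\in U\subseteq K_n$ then $\varphi_n(x)\in M\subseteq U$, i.e.\ $x\in\varphi_n^{-1}(U)$. Writing $\varphi_n^{-k}(U)=\varphi_{kn}^{-1}(U)$ (continuous, hence open preimages), this produces an increasing chain of open sets $U\subseteq\varphi_n^{-1}(U)\subseteq\varphi_n^{-2}(U)\subseteq\cdots$. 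Since $M\subseteq U$ implies $K_{kn}=\varphi_{kn}^{-1}(M)\subseteq\varphi_n^{-k}(U)$, we obtain $\bigcup_k\varphi_n^{-k}(U)\supseteq\bigcup_k K_{kn}=K$, so the sets $\varphi_n^{-k}(U)$ form an open cover of $K$.

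Now compactness finishes the argument. As the cover is increasing, a finite subcover reduces to a single member equal to $K$, so there is $k_0$ with $\varphi_{k_0 n}^{-1}(U)=K$, equivalently $\varphi_{k_0 n}(K)\subseteq U\subseteq K_n$. Applying $\varphi_n$ once more and using $\varphi_n(K_n)\subseteq M$, we get $\varphi_{(k_0+1)n}(K)=\varphi_n(\varphi_{k_0 n}(K))\subseteq\varphi_n(K_n)\subseteq M$, so $M$ is absorbing with time $t_0=(k_0+1)n$.

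The step I expect to carry the weight is turning the closed cover $\{K_m\}$ into the open cover $\{\varphi_n^{-k}(U)\}$. Pointwise absorption by itself only yields closed sets covering $K$, from which the Baire property extracts merely one set with nonempty interior, not a uniform absorption time. It is precisely the hypothesis $M\subseteq\overset{\circ}{K_n}$ that, together with invariance, produces the self-enlarging open set satisfying $U\subseteq\varphi_n^{-1}(U)$; compactness then upgrades pointwise to uniform absorption, which is the whole content of the equivalence.
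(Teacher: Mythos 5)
Your proof is correct and takes essentially the same approach as the paper: in the substantial direction both arguments cover the compact space $K$ by open flow-preimages of $\overset{\circ}{K_n}$, extract a finite subcover, and finish with invariance of $M$ to obtain a uniform absorption time. The only difference is organizational --- the paper uses the point-indexed cover $\{\varphi_{t_x}^{-1}(\overset{\circ}{K_n})\}_{x\in K}$, where $t_x$ are the pointwise absorption times, and takes the maximum of finitely many $t_x$, whereas you use the nested cover $\{\varphi_{kn}^{-1}(\overset{\circ}{K_n})\}_{k\in\mathbb{N}}$ built from the self-enlarging property $U\subseteq\varphi_n^{-1}(U)$, so that the finite subcover collapses to a single set.
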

\begin{proof}Clearly, if $M$ is absorbing it is pointwise absorbing and there exists $n\in\mathbb{N}$ such that $M\subseteq K =\overset{\circ}{K_n}$ in above construction. For the other implication consider the following. By assumption for every $x\in K$ there exists $t_x\geq 0$ such that 
\[\varphi_{t_x}(x)\in \overset{\circ}{K_n}\subset K_n \, . \] By continuity $\varphi_{t_x}^{-1}(\overset{\circ}{K_n})$ is open for every $x\in K$ and 
\[K\subseteq \bigcup\limits_{x\in K} \varphi_{t_x}^{-1}(\overset{\circ}{K_n})\, . \] Since $K$ is compact there exist finitely many $x_1,\dots,x_j$ for some $j\in \mathbb{N}$ such that 
\[K\subseteq \bigcup\limits_{k=1}^j \varphi_{t_{x_k}}^{-1}(\overset{\circ}{K_n})\, . \] This implies for $y\in K$ that \[\varphi_{t_{x_k}}(y)\in \overset{\circ}{K_n}\subset K_n \] for some $k\in\{1,\dots,j\}$ and therefore 
\[\varphi_{t_{x_k}+n}(y)\in M \, . \] Define $T:=\max \{t_{x_k}\mid k\in\{1,\dots,j\}\}$, then 
\[\varphi_{T+n}(y)\in M \] by invariance of $M$. 
\end{proof}
\section{Asymptotics of Dynamical Systems}
In this section we consider asymptotic properties of semiflows around closed invariant sets and give operator theoretic characterization of each such property. We also discuss correlations between them. 
\begin{defi}\label{DS} A closed invariant set $\emptyset\neq M \subseteq K$ is called
\begin{itemize}
\item[a)] \emph{uniformly attractive} if for all $U\in\mathcal{U}(M)$ there exists $t_0>0$ such that \[\varphi_t(K)\subseteq U \textrm{ for all }t\geq t_0 \, , \]
\item[b)] \emph{(pointwise) attractive} if for all $x\in K$ and $U\in\mathcal{U}(M)$ there exists $t_0>0$ such that
\[\varphi_t(x)\in U \textrm{ for all } t\geq t_0 \, , \]
\item[c)] \emph{likely limit set (or Milnor attractor)} if there exists a quasi invariant Borel measure $\mu$ on $K$ such that for all $U\in \mathcal{U}(M)$ and $\mu$-almost every $x\in K$ there exists $t_0>0$ with
\[\varphi_t(x)\in U \textrm{ for all } t\geq t_0 \, , \]
\item[d)]\emph{center of attraction} if for all $U\in \mathcal{U}(M)$ 
\[\lim\limits_{t\to\infty} \frac{1}{t}\lambda\left(\{s\in [0,t]\mid \varphi_s(x)\in U\}\right) =1 \] for all $x\in K$, where $\lambda$ denotes the Lebesgue measure on $[0,\infty)$,
\item[e)] \emph{stable in the sense of Lyapunov} if for all $U\in \mathcal{U}(M)$ there exists $V\in\mathcal{U}(M)$, $V\subseteq U$ such that 
\[\varphi_t(V)\subseteq U \textrm{ for all } t\geq 0 \, . \]
\end{itemize}
\end{defi}
The concepts {\normalfont{a), b)}} and {\normalfont{e)}} in \Cref{DS} have been established by A. M. Lyapunov in his dissertation (\cite{lya}) in 1892 and have since been broadly applied and investigated for dynamics on locally compact metric spaces. See \cite[Chapt. II]{bhatia} or \cite[Chapt. 3, Sect. 2]{denker}. The property {\normalfont{d)}} in \Cref{DS} appears in G. D. Birkhoff's monograph ``Dynamical Systems'' \cite[Chapt. VII]{birkhoff} as ``central motion'' and has been further investigated by H. Hilmy, see for example \cite{hilmy}, K. Sigmund, in \cite{sigmund} and by H. Kreidler in \cite[Sect. 4]{hekr} to name a few. Definition {\normalfont{c)}} for semiflows on smooth compact manifolds is due to J. Milnor and can be found in \cite[Section 2]{milnor}.
\begin{rem} If $(K,(\varphi_t)_{t\geq 0})$ is a dynamical system with metric $K$ then there exists one $\mu$-null set satisfying the assumptions in \Cref{DS} {\normalfont{c)}} that does not depend on $U\in\mathcal{U}(M)$ since there is a countable neighborhood basis and the countable union of null sets is again a null set. 
\end{rem}
\begin{rem} For the concepts defined in \Cref{DS} the following implications hold.
{\begin{center}\begin{xy}
  \xymatrix{a) \ar@{=>}[r] \ar@{=>}[rd]&b)\ar@{=>}[r]\ar@{=>}[rd]&c) & \textrm{and} & a) \ar@{<=>}[r] &b)+e) \\
&e) &d)&&}
\end{xy}\end{center}}
  
The opposite implications do not hold true in general which can be seen in the following examples. The equivalence of \Cref{DS} {\normalfont{a)}} $\iff$ {\normalfont{b)}} + {\normalfont{e)}} will be proven in below \Cref{K} and \Cref{kompakt}. 
\begin{exm}
\end{exm}
\begin{itemize}
\item[a)] Consider $K:=\mathbb{R}\cup\{\infty\}$ the one-point compactification of $\mathbb{R}$ and the semiflow $(\varphi_t)_{t\geq 0}$ defined by 
\[\varphi_t(x):= \begin{cases}x+t &x\in \mathbb{R} \\ \infty &x=\infty \end{cases} \, . \]
Then $M:=\{\infty\}$ is attractive but not uniformly attractive.
\item[b)] Take $K:=[0,\infty]$ the one-point compactification of $[0,\infty)$ and the semiflow $(\varphi_t)_{t\geq 0}$ on $K$, with 
\[\varphi_t(x):= \begin{cases}e^{-t}x &x\in [0,\infty) \\ \infty &x=\infty \end{cases} \, . \] Consider the standard Gaussian measure $\gamma$ on $[0,\infty]$ which is a regular Borel measure on $K$ that is quasi-invariant with respect to $(\varphi_t)_{t\geq 0}$ since it is equivalent to the Lebesgue measure $\lambda$. In particular,  $\gamma(\{\infty\})=0$. Then $M:=\{0\}$ is a likely limit set for $\gamma$ since $\gamma([0,\infty))=1$ and $\varphi_t(x)\to 0$ for all $x\in [0,\infty)$ but it is neither attractive nor a center of attraction since $\varphi_t(\infty)=\infty$ for all $t\geq 0$. 
\item[c)]In \cite[p.287]{hilmy}, H. Hilmy gave a concrete example for a center of attraction that is not attractive. We give a simplified version of this example. Take the following differential equation 
\[\left\{\begin{array}{ll} \dot{r}= -r\log(r)\left((1-r)^2+\sin^2(\theta)\right) \\
         \dot{\theta}= (1-r)^2+\sin^2(\theta)\end{array}\right. \, .\]
given in polar coordinates on $K:=\{z\in\mathbb{C}\mid 1\leq |z|\leq 2\}$. The solutions of above differential equation exist for all times and all initial values in $K$ and form a semiflow $(\varphi_t)_{t\geq 0}$ thereon. \\[1em]

The dependence of $r(t)$ and $\theta(t)$ is given by 
\[ r(t)=e^{C\cdot e^{-\theta(t)}} \quad \leftrightarrow \quad \theta(t)=-\log(\log(r(t)))+\log(C) \, . \]

The orbit of an initial state with radius $r>1$ forms a spiral towards the unit circle. On the unit circle the radius is constant and the rate of change of $\theta(t)$ is given by the differential equation
\[\dot{\theta}=\sin^2(\theta) \, . \]
Thus, $z_1=1=e^0$ und $z_2=-1=e^{\pi i}$ are fixed points, because $\sin^2(0)=\sin^2(\pi)=0$. Therefore, the orbits of states on the unit circle converge to either $z_1$ or $z_2$.
The set $M:=\{z_1\}\cup\{z_2\}$ is a center of attraction for $(K,(\varphi_t)_{t\geq 0})$ and it is even minimal with this property. It is easy to see that the minimal attractive subset in this example is $\mathbb{T}:=\{z\in \mathbb{C}\mid |z|=1\}$.
\end{itemize}
\end{rem}
The next proposition characterizes all above mentioned attractivity properties by means of the corresponding Koopman semigroup. 
\begin{prop}\label{main}Let $(K,(\varphi_t)_{t\geq 0})$ be a dynamical system, $\emptyset\neq M \subseteq K$ a closed invariant set and $(S(t))_{t\geq 0}$ the restricted Koopman semigroup, i.e. $S(t):=T(t)|_{I_M}$ for $t\geq 0$. \begin{itemize}
\item[(I)] The following are equivalent.
\begin{itemize}
\item[a)] $(S(t))_{t\geq 0}$ is strongly stable.
\item[b)] $M$ is uniformly attractive.
\end{itemize}
\item[(II)] The following are equivalent
\begin{itemize}
\item[a)] $(S(t))_{t\geq 0}$ is weakly stable.
\item[b)] $M$ is attractive.
\end{itemize}
\item[(III)] The following are equivalent.
\begin{itemize}
\item[a)] $(S(t))_{t\geq 0}$ is almost everywhere pointwise stable.
\item[b)] $M$ is a likely limit set. 
\end{itemize}
\item[(IV)] The following are equivalent.
\begin{itemize}
\item[a)] $(S(t))_{t\geq 0}$ is almost weakly stable.
\item[b)] $M$ is a center of attraction. 
\end{itemize}
\end{itemize}
\end{prop}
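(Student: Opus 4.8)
The plan is to prove all four equivalences through a single dictionary between the geometry of $M$ and the decay of the functions $S(t)f=f\circ\varphi_t$. I would record two facts at the outset and reuse them throughout. First, the sets $U_{\varepsilon,f}=[|f|<\varepsilon]$ with $f\in I_M$ and $\varepsilon>0$ form a basis of $\mathcal{U}(M)$, so that the assertion ``$\varphi_t(x)\in U_{\varepsilon,f}$'' is literally ``$|f(\varphi_t(x))|<\varepsilon$''. Second, for $x\in K\setminus M$ the Dirac measure $\delta_x$ lies in $I_M'$ and satisfies $\langle S(t)f,\delta_x\rangle=f(\varphi_t(x))$, while a general $\mu\in I_M'$ is a finite measure with $\langle S(t)f,\mu\rangle=\int_K f\circ\varphi_t\,\mathrm{d}\mu$. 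I would also note that each $S(t)$ is a contraction, since $\varphi_t(K)\subseteq K$ forces $\|f\circ\varphi_t\|_\infty\leq\|f\|_\infty$; this is what permits the use of \Cref{fss}.

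For (I) the argument is a direct norm computation: $\|S(t)f\|_\infty=\sup_{x\in K}|f(\varphi_t(x))|$, so $\|S(t)f\|<\varepsilon$ holds precisely when $\varphi_t(K)\subseteq U_{\varepsilon,f}$. Running this over the basis of $\mathcal{U}(M)$ turns ``$\|S(t)f\|\to 0$ for every $f\in I_M$'' into uniform attractivity. For (II) I would test weak stability against the Dirac measures: this yields $f(\varphi_t(x))\to 0$ for every $x\in K\setminus M$ and every $f\in I_M$, which by the basis is exactly attractivity (the case $x\in M$ being trivial since then $\varphi_t(x)\in M\subseteq U$). Conversely, attractivity gives $f(\varphi_t(x))\to 0$ pointwise, and since $|f\circ\varphi_t|\leq\|f\|_\infty$ is dominated by a constant lying in $L^1(|\mu|)$, dominated convergence upgrades this to $\langle S(t)f,\mu\rangle\to 0$ for every $\mu\in I_M'$, i.e.\ weak stability.

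Part (III) is the almost-everywhere analogue of (II), treated with the same dictionary while keeping the quasi-invariant measure $\mu$ fixed on both sides. Given the likely-limit-set property, for each $f\in I_M$ I would apply the defining condition to the countably many neighborhoods $U_{1/n,f}$ and take the union of the resulting null sets; off this single null set $\limsup_t|f(\varphi_t(x))|\leq 1/n$ for all $n$, hence $S(t)f(x)\to 0$, which is almost everywhere pointwise stability. The converse reverses this: for $U\in\mathcal{U}(M)$ pick a basic $U_{\varepsilon,f}\subseteq U$, and a.e.\ pointwise stability of $S(t)f$ gives, off a null set, $|f(\varphi_t(x))|<\varepsilon$ and hence $\varphi_t(x)\in U$ for all large $t$.

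The main work is (IV), and I expect the implication ``center of attraction $\Rightarrow$ almost weak stability'' to be the real obstacle: almost weak stability is a statement about all functionals $\mu$, whereas the center-of-attraction hypothesis only controls single orbits, i.e.\ Dirac measures. I would route everything through \Cref{fss}, which (using contractivity) replaces almost weak stability by the Cesàro condition $\frac{1}{T}\int_0^T|\langle S(t)f,\mu\rangle|\,\mathrm{d}t\to 0$. For ``$\Rightarrow$'' I take $\mu=\delta_x$ and apply Chebyshev's inequality: since $\lambda\{s\leq T:|f(\varphi_s(x))|\geq\varepsilon\}\leq\varepsilon^{-1}\int_0^T|f(\varphi_t(x))|\,\mathrm{d}t$, the Cesàro decay forces the set $\{s:\varphi_s(x)\notin U_{\varepsilon,f}\}$ to have density $0$; this is center of attraction once one enlarges $U_{\varepsilon,f}$ to a general $U\in\mathcal{U}(M)$ (a larger set still has density $1$), the case $x\in M$ again being trivial. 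For ``$\Leftarrow$'' the hypothesis gives, by splitting $[0,T]$ into the density-$1$ set where $|f\circ\varphi_s|<\varepsilon$ and its density-$0$ complement where $|f\circ\varphi_s|\leq\|f\|_\infty$, that $\frac{1}{T}\int_0^T|f(\varphi_t(x))|\,\mathrm{d}t\to 0$ for every $x$. Then Fubini yields $\frac{1}{T}\int_0^T|\langle S(t)f,\mu\rangle|\,\mathrm{d}t\leq\int_K\big(\frac{1}{T}\int_0^T|f(\varphi_t(x))|\,\mathrm{d}t\big)\,\mathrm{d}|\mu|(x)$, and letting $T\to\infty$ under the integral by dominated convergence (dominant $\|f\|_\infty$, finite $|\mu|$) gives the Cesàro condition for all $\mu$, whence \Cref{fss} delivers almost weak stability.
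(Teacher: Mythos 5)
Your proposal is correct, and for parts (I)--(III) it is essentially the paper's own argument: the same basis $U_{\varepsilon,f}\subseteq U$ translating neighborhoods into functions, the same testing of weak stability against Dirac measures, and the same dominated-convergence upgrade from Dirac measures to general $\mu\in I_M'$; your countable union of null sets over the neighborhoods $U_{1/n,f}$ in (III) merely makes explicit what the paper compresses into ``the other implication follows similarly''. The genuine divergence is in (IV). Both you and the paper pivot on \Cref{fss}, i.e.\ on replacing almost weak stability by the Ces\`{a}ro condition $\frac{1}{T}\int_0^T|\langle S(t)f,\mu\rangle|\,\mathrm{d}t\to 0$, but the executions differ on both implications. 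For ``center of attraction $\Rightarrow$ almost weak stability'' the paper establishes the Ces\`{a}ro decay only for compactly supported $f\in \C_0(K\setminus M)$, dominating $|f|$ by the indicator $\mathds{1}_{\mathrm{supp}(f)}$, and then passes to all of $I_M$ via density of compactly supported functions together with closedness of $I_{\textrm{aws}}$ (\Cref{awsi}); your $\varepsilon$-splitting of $[0,T]$ into the density-one set where $|f\circ\varphi_s|<\varepsilon$ and its density-zero complement treats every $f\in I_M$ at once, so the approximation step and the appeal to \Cref{awsi} disappear. For the converse the paper detours through the ideal structure: it writes $I_{\textrm{aws}}=I_L$ for a closed invariant $L\subseteq M$, dominates indicators of compact sets $A\subseteq K\setminus L$ by continuous extensions $g\geq \mathds{1}_A$ in $I_L$, and deduces that $L$ (hence the larger $M$) is a center of attraction; you instead apply Chebyshev's inequality $\mathds{1}_{[|f\circ\varphi_s|\geq\varepsilon]}\leq \varepsilon^{-1}|f\circ\varphi_s|$ to the Ces\`{a}ro integrals against $\delta_x$, which works directly on $M$ and needs neither the Urysohn-type extension nor the fact that $I_{\textrm{aws}}$ is an ideal corresponding to some closed set. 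Your route is shorter and more elementary, staying at the level of single functions and Dirac measures throughout; what the paper's heavier machinery buys is reusability, since the identification $I_{\textrm{aws}}=I_L$ and the ideal property of $I_{\textrm{aws}}$ are exactly what Section 4 exploits to construct and characterize the minimal center of attraction.
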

\begin{proof} Proof of {\normalfont{(I)}}: First we show {\normalfont{a)$\implies$b)}}. Take $U\in \mathcal{U}(M)$. Since $K\setminus M$ is completely regular there is $f\in I_M$ and $\eps>0$ such that $U_{\eps,f}\subseteq U$. By assertion {\normalfont{a)}} there is $t_0>0$ such that $\|S(t)f\|<\eps$ for all $t\geq t_0$. This implies \[|S(t)f(x)|=|f(\varphi_t(x))|<\eps\textrm{ for all }x\in K\setminus M\, , \ t\geq t_0\, .\] Therefore, $\varphi_t(K)\subseteq U_{\eps, f}\subseteq U$ for all $t\geq t_0$. Also {\normalfont{b)$\implies$a)}} because for every $\eps>0$ and $f\in I_M$ there is a $t_0>0$ such that $\varphi_t(K)\subseteq U_{\eps,f}$ for all $t\geq t_0$. This implies $|S(t)f(x)|<\eps$ for all $t\geq t_0$ and $x\in K$ and therefore
$\|S(t)f\|=\sup\limits_{x\in K}|S(t)f(x)|<\eps$ for all $t\geq t_0$.\\[1em]

Proof of {\normalfont{(II)}}: To prove {\normalfont{a)$\implies$b)}} take $U\in\mathcal{U}(M)$ and $x\in K$. Then there exist $\eps>0$ and $f\in I_M$ such that $U_{\eps,f}\subseteq U$ and since $(S(t))_{t\geq 0}$ is weakly stable there exists $t_0>0$ such that 
\[\langle S(t)f,\delta_x\rangle =f(\varphi_t(x)) < \eps \textrm{ for all }t\geq t_0 \] which implies $\varphi_t(x)\in U_{\eps,f}\subseteq U$ for all $t\geq t_0$. For the opposite implication let $\eps>0$, $f\in I_M$ and $x \in K$ by {\normalfont{b)}} there exists $t_0>0$ such that $\langle S(t)f,\delta_x\rangle < \eps $ for all $t\geq t_0$ and thus
\[ \langle S(t)f, \delta_x \rangle \to 0 \textrm{ as } t\to \infty \] for all Dirac measures $\delta_x$ and by Lebesgue's theorem of dominated convergence
\[ \langle S(t)f, \mu \rangle \to 0 \textrm{ as } t\to \infty \] for all $\mu\in I_M'$.\\[1em]

Proof of {\normalfont{III}}: We first prove {\normalfont{a)}} implies {\normalfont{b)}}. First take a neighborhood $U\in \mathcal{U}(M)$, then there exist $f\in I_M$ and $\varepsilon> 0$ with $U_{\varepsilon,f}\subseteq U$. By assumption there exists a quasi invariant Borel measure $\mu$ and $\mu$-null set $N_f$ depending on $f$ such that for every $x\in N_f^c$ there is $t_0>0$ such that
\[T(t)f(x)< \varepsilon\] for all $t\geq t_0$.  Clearly, this implies $\varphi_t(x)\in U_{\varepsilon,f}\subseteq U$ for all $t\geq t_0$. The other implication follows similarly. \\[1em]
Proof of {\normalfont{(IV)}}: First recall that $I_M\cong \C_0(K\setminus M)$ by $f\mapsto f|_{K\setminus M}$. Let $M$ be a center of attraction and $U\in \mathcal{U}(M)$ open, i.e.,
\[\lim\limits_{t\to \infty}\frac{1}{t}\, \lambda(\{s\in[0,t]\mid \phy_s(x)\in U \})=1 \] for all $x\in K$ and consequently for $A:=K\setminus U$, which is a compact subset of $K\setminus M$, the following equivalence holds.
\begin{align*}&\lim\limits_{t\to \infty}\frac{1}{t}\, \lambda(\{s\in[0,t]\mid \phy_s(x)\in A \})=0 \\
\iff &\lim\limits_{t\to \infty}\frac{1}{t}\, \int\limits_0^t \! \mathds{1}_A(\varphi_s(x)) \, \mathrm{d}s =0\, .
\end{align*}
Now take $f\in I_M$ with compact support and denote $\mathrm{supp}(f)=:A$. Then $|f|\leq \|f\|\cdot\mathds{1}_A=\mathds{1}_A$ and thus
\[\lim\limits_{t\to \infty}\frac{1}{t}\, \int\limits_0^t \! |T(s)f|(x) \, \mathrm{d}s =0\, . \] Note that the mapping
\[(s,x)\mapsto |T(s)f|(x) \] is continuous and hence measurable, thus for $t>0$ fixed 
\begin{align*}\frac{1}{t}\, \int\limits_0^t \! |T(s)f|(x) \, \mathrm{d}s &= \frac{1}{t}\, \int\limits_0^t \! \int\limits_K \! |T(s)f|\, \mathrm{d}\delta_x \, \mathrm{d}s \\
&=  \int\limits_K \! \frac{1}{t}\, \int\limits_0^t \! |T(s)f|\, \mathrm{d}s \,\mathrm{d}\delta_x \xrightarrow{t\to\infty} 0
\end{align*} by the Fubini-Tonelli Theorem. 
And thus by Lebesgue's Theorem of dominated convergence
\begin{align*}\int\limits_K \! \frac{1}{t}\, \int\limits_0^t \! |T(s)f|\, \mathrm{d}s \,\mathrm{d}\mu \geq
\frac{1}{t}\, \int\limits_0^t \! |\langle T(s)f,\mu\rangle | \, \mathrm{d}s \xrightarrow{t\to\infty}0\, . \end{align*} for all $\mu\in I_M'$.
This implies $\C_c(K\setminus M)\subseteq I_{\textrm{aws}}$ and since $I_{\textrm{aws}}$ is closed and the continuous functions with compact support are dense in $\C_0(K\setminus M)$ it follows that
\[\C_0(K\setminus M)\cong I_M\subseteq I_{\textrm{aws}}\, .\]

On the other hand since $I_{\textrm{aws}}$ is closed invariant ideal there exists $L\subset K$ closed and invariant such that $I_{\textrm{aws}}=I_L$. Take $A\subseteq K\setminus L$ compact and $\mathds{1}\in\C(A)$ then there exists a continuous extension $g\in \C_0(K\setminus L)\cong I_L$ and the measurable extension $\mathds{1}_A$ as the characteristic function on $A$. Since $\mathds{1}_A\leq g$ the following equation is true for all $x\in K$
\[\lim\limits_{t\to \infty}\frac{1}{t}\, \int\limits_0^t \! |T(s)g|(x) \, \mathrm{d}s =0 \] hence
\[\lim\limits_{t\to \infty}\frac{1}{t}\, \int\limits_0^t \! T(s)\mathds{1}_A(x) \, \mathrm{d}s =0\, , \]
which implies
\[\lim\limits_{t\to \infty}\frac{1}{t}\,\lambda(\{s\in [0,t]\mid \varphi_s(x)\in A\})=0 \] and thus
\[\lim\limits_{t\to \infty}\frac{1}{t}\,\lambda(\{s\in [0,t]\mid \varphi_s(x)\in K\setminus A\})=1 \, . \]
The open neighborhoods of $M$ are exactly the complements of compact sets $A$ and since $A$ was arbitrary the last equation holds for all open neighborhoods $U$ of $M$. Thus $M$ is a center of attraction.
\end{proof}
To conclude this section we show that the concepts of uniform attractivity and pointwise attractivity coincide if and only if $M$ is stable in the sense of Lyapunov. To do so we first characterize stability in the sense of Lyapunov further.

\begin{prop}\label{stable}Let $\emptyset\neq M \subseteq K$ be closed and invariant. Then the following are equivalent.
\begin{itemize}
\item[a)]The set $M$ is stable in the sense of Lyapunov.
\item[b)] Every $U\in\mathcal{U}(M)$  contains an invariant $V\in \mathcal{U}(M)$. If $U$ is closed, $V$ can be chosen closed as well.
\end{itemize}
\end{prop}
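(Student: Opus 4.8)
The plan is to prove both implications directly, with essentially all of the work lying in the direction a) $\implies$ b); the converse is immediate.

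For b) $\implies$ a), suppose every $U\in\mathcal{U}(M)$ contains an invariant $V\in\mathcal{U}(M)$. Given $U\in\mathcal{U}(M)$, choose such a $V\subseteq U$. Since $V$ is invariant we get $\varphi_t(V)\subseteq V\subseteq U$ for all $t\geq0$, which is exactly Lyapunov stability in the sense of \Cref{DS}. So there is nothing more to do here.

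For a) $\implies$ b), the idea is to upgrade the Lyapunov-witnessing neighborhood to a genuinely invariant one by passing to its forward orbit. Given $U\in\mathcal{U}(M)$, Lyapunov stability supplies $W\in\mathcal{U}(M)$ with $W\subseteq U$ and $\varphi_t(W)\subseteq U$ for all $t\geq0$. I would set
\[ V:=\bigcup_{t\geq0}\varphi_t(W)\, . \]
Three checks then close this case. First, $V\subseteq U$, since each $\varphi_t(W)\subseteq U$. Second, $V$ is a neighborhood of $M$, because $W=\varphi_0(W)\subseteq V$ and hence $M\subseteq\operatorname{int}(W)\subseteq\operatorname{int}(V)$. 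Third, $V$ is invariant: for $s\geq0$ the semigroup law $\varphi_{s+t}=\varphi_s\circ\varphi_t$ gives $\varphi_s(V)=\bigcup_{t\geq0}\varphi_{s+t}(W)=\bigcup_{t\geq s}\varphi_t(W)\subseteq V$.

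It remains to handle the closed refinement. When $U$ is closed I would take $V:=\overline{\bigcup_{t\geq0}\varphi_t(W)}$ instead. Writing $V_0:=\bigcup_{t\geq0}\varphi_t(W)$, closedness of $U$ together with $V_0\subseteq U$ yields $V=\overline{V_0}\subseteq U$, and $V\supseteq W$ keeps $V$ a neighborhood of $M$. The one genuinely topological point — which I expect to be the main (though mild) obstacle — is invariance of the closure: using continuity of each time-$s$ map $\varphi_s$ (a consequence of joint continuity of the semiflow) one has $\varphi_s(\overline{V_0})\subseteq\overline{\varphi_s(V_0)}\subseteq\overline{V_0}=V$, where the last inclusion uses $\varphi_s(V_0)\subseteq V_0$ established above. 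This is exactly where continuity of the flow is essential; the remaining verifications are routine bookkeeping with the semigroup law and the neighborhood basis $\mathcal{U}(M)$.
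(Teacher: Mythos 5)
Your proof is correct and follows essentially the same route as the paper: the paper also takes the forward orbit $\bigcup_{t\geq 0}\varphi_t(V)$ of the Lyapunov-witnessing neighborhood and passes to its closure (inside a closed $U$) to obtain the invariant neighborhood, with the converse direction dismissed as trivial. Your write-up is in fact slightly more detailed, spelling out the neighborhood and invariance checks (including $\varphi_s(\overline{V_0})\subseteq\overline{\varphi_s(V_0)}$) that the paper leaves implicit.
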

\begin{proof}For the implication {\normalfont{a)$\Rightarrow$b)}} take $U\in\mathcal{U}(M)$ closed and $V\in \mathcal{U}(M)$, $V\subseteq U$ such that $x\in V$ implies $\varphi_t(x)\in U$ for all $t\geq 0$. Consider $W:=\overline{\bigcup_{t\geq 0}\varphi_t(V)}$. Then $V\subset W\subset U$. Therefore, $W$ is still a closed neighborhood of $M$ which is invariant. The implication {\normalfont{b)$\Rightarrow$a)}} is trivial.\\
\begin{rem} Furthermore, a closed invariant set $\emptyset\neq M\subseteq K$ is Lyapunov stable if and only if \[M=\bigcap\limits_{V\in\mathcal{U}(M)\textrm{ inv.}} V \, . \]
\end{rem}
We prove that the assertion implies {\normalfont{b)}} in \Cref{stable}. Let $U$ be an open neighborhood of $M$ and assume there is no invariant neighborhood $V$ of $M$ with $V\subseteq U$. Then for all invariant neighborhoods $V$ there exists $x_V\in V$ with $x_V\in U^c$. This defines a net $(x_V)_{V\textrm{ inv.}}$ which has a convergent subnet, since $U^c$ is compact. Let $\overline{x}$ be the limit of said convergent subnet. On the other hand there exists a convergent subnet with limit in $M$ since $K$ is compact and the sets $V$ are neighborhoods of $M$. This implies $\overline{x}\in M=\bigcap\limits_{V\in\mathcal{U}(M)\textrm{ inv.}} V$ which contradicts the fact that $\overline{x}\in U^c$. This implies {\normalfont{b)}}.
\end{proof}
\begin{prop}\label{K}Let $(K,(\varphi_t)_{t\geq 0})$ be a dynamical system and $\emptyset\neq M\subseteq K$ a closed invariant subset which is uniformly attractive, then $M$ is stable in the sense of Lyapunov. 
\end{prop}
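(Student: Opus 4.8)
The plan is to reduce the Lyapunov condition to a statement about the finite time window $[0,t_0]$ supplied by uniform attractivity, and to control that window by a tube-lemma argument that exploits the joint continuity of the semiflow together with the invariance of $M$.

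Fix $U \in \mathcal{U}(M)$; since the sets $U_{\varepsilon,f}$ form a basis of $\mathcal{U}(M)$ we may assume $U$ is open. By uniform attractivity (\Cref{DS} a)) there is $t_0 > 0$ with $\varphi_t(K) \subseteq U$ for all $t \geq t_0$, so for \emph{any} neighborhood $V$ of $M$ the inclusion $\varphi_t(V) \subseteq \varphi_t(K) \subseteq U$ holds automatically once $t \geq t_0$. It therefore remains only to find $V \in \mathcal{U}(M)$ with $V\subseteq U$ for which $\varphi_t(V) \subseteq U$ on the remaining compact interval $[0,t_0]$.

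For this finite window I would consider the continuous evaluation map $\Phi \colon [0,t_0]\times K \to K$, $(t,x)\mapsto \varphi_t(x)$. Invariance of $M$ gives $\varphi_t(x) \in M \subseteq U$ for every $x \in M$ and $t\in [0,t_0]$, so the compact slab $[0,t_0]\times M$ lies in the open set $\Phi^{-1}(U)$. The tube lemma, applied with the compact factor $[0,t_0]$, then yields for each $x\in M$ an open neighborhood $W_x$ with $[0,t_0]\times W_x \subseteq \Phi^{-1}(U)$, i.e. $\varphi_t(W_x)\subseteq U$ for all $t\in[0,t_0]$; setting $W := \bigcup_{x\in M} W_x$ produces an open neighborhood of $M$ with $\varphi_t(W)\subseteq U$ for all $t \in [0,t_0]$.

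Finally I would take $V := W \cap U$, which is open and contains $M$, hence lies in $\mathcal{U}(M)$ and satisfies $V \subseteq U$. Combining the two regimes, $\varphi_t(V)\subseteq U$ for $t\in[0,t_0]$ and $\varphi_t(V)\subseteq\varphi_t(K)\subseteq U$ for $t\geq t_0$, gives $\varphi_t(V)\subseteq U$ for all $t\geq 0$, which is exactly stability in the sense of Lyapunov. The only genuinely nontrivial point is the \emph{uniform} control over the entire window $[0,t_0]$ rather than at a single time: this is precisely where compactness of $[0,t_0]$ (through the tube lemma) and the joint continuity of $(t,x)\mapsto\varphi_t(x)$ are indispensable, and it is the step I would expect to demand the most care.
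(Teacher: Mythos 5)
Your proof is correct, but it takes a genuinely different route from the paper's. You argue directly: uniform attractivity supplies a horizon $t_0$ after which $\varphi_t(V)\subseteq\varphi_t(K)\subseteq U$ holds for \emph{any} choice of $V$, and the remaining compact window $[0,t_0]$ is controlled by the tube lemma applied to the jointly continuous map $\Phi\colon(t,x)\mapsto\varphi_t(x)$, using invariance of $M$ to place the slab $[0,t_0]\times M$ inside the open set $\Phi^{-1}(U)$; the outcome is an explicit admissible neighborhood $V=W\cap U$. The paper instead argues by contradiction inside its Koopman formalism: invoking \Cref{stable}, failure of Lyapunov stability yields a basic neighborhood $U_{\varepsilon,f}$ containing no invariant neighborhood, whence a net $(x_i)$ in shrinking neighborhoods of $M$ and times $(t_i)$ with $T(t_i)f(x_i)\geq\varepsilon$; uniform attractivity (in the form $\|T(t)f\|<\varepsilon$ eventually) bounds the $t_i$, compactness extracts convergent subnets $t_{i_j}\to t^*$ and $x_{i_j}\to x\in M$, and joint continuity forces $f(\varphi_{t^*}(x))\geq\varepsilon$, contradicting $f\in I_M$ and the invariance of $M$. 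The two arguments share the same quantitative kernel---uniform attractivity truncates time, and compactness plus joint continuity handle the truncated part---but yours is more elementary and arguably cleaner: it avoids the net bookkeeping (where the paper's write-up is loose, e.g.\ it calls the open set $U_{\varepsilon,f}$ compact and does not justify why the limit $x$ lies in $M$, a point that needs the net to be indexed by the shrinking neighborhoods themselves), it does not rely on \Cref{stable}, and it stays entirely on the state-space side, whereas the paper's version is phrased to fit the article's operator-theoretic Leitmotiv of reading everything through $T(t)f$ and the ideal $I_M$.
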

\begin{proof} Let $M$ be uniformly attractive and assume $M$ is not stable in the sense of Lyapunov then there exists $f\in I_M$, $f\geq 0$ and $\varepsilon>0$ such that $U_{\varepsilon,f}$ contains no invariant neighborhood. This implies there exists a net $(x_i)_{i\in I}\subseteq U_{\varepsilon,f}$ and $(t_i)_{i\in I}\subseteq [0,\infty)$ such that 
\[T(t_i)f(x_i)\geq\varepsilon\textrm{ for all } i \in I \,. \] Thus, $\left(T(t_i)f(x_i)\right)_{i\in I}$ has a convergent subnet, converging to $y\in[f\geq \varepsilon]$. \\

Since there exists $t_0>0$ such that $\|T(t)f\|< \varepsilon$ for all $t\geq t_0$ since $M$ is uniformly attractive by assumption, the net $(t_i)_{i\in I}$ is bounded and thus there exists a convergent subnet $(t_{i_j})_{j\in J}$ with $t_{i_j}\to t^* \leq t_0$. Also, since $U_{\varepsilon,f}$ is compact there exists a convergent subnet $(x_{i_j})_{j\in J}$ with $x_{i_j}\to x\in M$. By continuity $T(t_{i_j})f(x_{i_j})\to T(t^*)f(x) \in M$, since $M$ is invariant. By uniqueness of the limit $M\ni T(t^*)f(x) =y \in [f\geq\varepsilon]$ which is a contradiction. Therefore, $M$ must be stable in the sense of Lyapunov.
\end{proof}

On the other hand stability in the sense of Lyapunov and attractivity imply uniform attractivity. 
\begin{prop}\label{kompakt}
Let $(K,(\varphi_t)_{t\geq 0})$ be a dynamical system and $\emptyset \neq M \subseteq K$ attractive and stable in the sense of Lyapunov. Then $M$ is uniformly attractive.
\end{prop}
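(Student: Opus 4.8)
The plan is to reduce the uniform statement to a pointwise one by compactness, while using stability in the sense of Lyapunov to upgrade the pointwise condition ``the orbit eventually enters a prescribed neighborhood'' into the stronger ``the orbit enters and thereafter never leaves''. This is the same finite-subcover mechanism already used to compare absorbing and pointwise absorbing sets earlier in the paper.

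First I would fix an arbitrary $U\in\mathcal{U}(M)$, which we may assume to be open. By Lyapunov stability, together with the fact that the sets $U_{\varepsilon,f}$ form a basis of $\mathcal{U}(M)$, I can choose an \emph{open} neighborhood $V\in\mathcal{U}(M)$ with $V\subseteq U$ and $\varphi_s(V)\subseteq U$ for all $s\geq 0$; equivalently, \Cref{stable} lets me take $V$ invariant, so that $\varphi_s(V)\subseteq V\subseteq U$ for all $s\geq 0$. Next, for each $x\in K$ the attractivity of $M$ (applied to the neighborhood $V$) yields a time $t_x>0$ with $\varphi_t(x)\in V$ for all $t\geq t_x$, in particular $\varphi_{t_x}(x)\in V$. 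Since $\varphi_{t_x}$ is continuous and $V$ is open, the set $O_x:=\varphi_{t_x}^{-1}(V)$ is an open neighborhood of $x$, and $\{O_x\}_{x\in K}$ is an open cover of $K$. Compactness then produces a finite subcover $O_{x_1},\dots,O_{x_j}$, and I set $t_0:=\max_{1\leq k\leq j} t_{x_k}$.

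Finally I would check uniform attractivity directly. Given any $y\in K$, pick $k$ with $y\in O_{x_k}$, so that $\varphi_{t_{x_k}}(y)\in V$. The semiflow law and forward invariance of $V$ under $U$ give, for all $s\geq 0$,
\[
\varphi_{t_{x_k}+s}(y)=\varphi_s\bigl(\varphi_{t_{x_k}}(y)\bigr)\in\varphi_s(V)\subseteq U ,
\]
so $\varphi_t(y)\in U$ for all $t\geq t_{x_k}$, and in particular for all $t\geq t_0$. As $y\in K$ was arbitrary, $\varphi_t(K)\subseteq U$ for all $t\geq t_0$, which is precisely uniform attractivity.

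The only delicate point, and where the hypotheses genuinely interact, is the order of quantifiers: pointwise attractivity alone gives for each state a point-dependent time after which its orbit lies in $V$, but it does not by itself prevent the orbit from wandering out of $V$ again, so a naive $\max$ over a finite subcover would not control all later times. It is exactly Lyapunov stability, i.e. the forward invariance $\varphi_s(V)\subseteq U$, that forbids this escape and lets the single finite time $t_0$ extracted from compactness work simultaneously for every initial state.
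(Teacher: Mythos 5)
Your proof is correct and is essentially the paper's own argument: both reduce via Lyapunov stability (the definition, or equivalently \Cref{stable}) to a forward-invariant neighborhood inside $U$, then use pointwise attractivity plus continuity to produce the open cover by sets of the form $\varphi_{t_x}^{-1}(V)$, and finish with compactness, a finite subcover, and the maximum of the finitely many times. If anything, your write-up is slightly more careful than the paper's, which starts by taking $U$ ``open and invariant'' without spelling out why that reduction is legitimate.
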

\begin{proof}
Let $M$ be pointwise attractive and stable in the sense of Lyapunov. Take $U\in \mathcal{U}(M)$ open and invariant. Then for every $x\in K$ there exists $t_0=t_0(x,U)$ such that 
\[\varphi_t(x)\in U \textrm{ for all } t\geq t_0 \, . \] 
Since $\varphi_{t_0}$ is continuous there exists an open neighborhood $U_x$ of $x$ with 
\[\varphi_{t}(U_x)\subseteq U \textrm{ for all }t\geq t_0 \]
since $U$ is invariant. Now since $K$ is compact there exist $x_1,\dots,x_n\in K$ for some $n\in \mathbb{N}$ such that $K\subseteq \bigcup\limits_{i=1}^n U_{x_i}$ and for $t\geq \max\limits_{i=1,\dots,n} t_0(x_i,U)$
\[\varphi_t(K)\subseteq \varphi_t\left(\bigcup\limits_{i=1}^n U_{x_i}\right)\subseteq \bigcup\limits_{i=1}^n \varphi_t(U_{x_i})\subseteq U \, . \] This implies the assertion. 
\end{proof}

\section{Existence and Characterization of Minimal Attractors}
Given a dynamical system $(K,(\varphi_t)_{t\geq 0})$ there always exist attractors in the sense of above \Cref{DS} {\normalfont{a),b),c)}} and {\normalfont{d)}} since the subspaces $I_{\textrm{ss}}$, $I_{\textrm{ws}}$, $I_{\textrm{aws}}$ and $I_{\textrm{aews}}$ are all closed ideals of $\mathrm{C}(K)$ and are maximal with this property. We thus obtain a corresponding closed invariant set $\emptyset\neq M\subseteq K$ that is uniformly attractive, attractive, a likely limit set or a center of attraction and it is minimal with this property by construction. In this subsection we will discuss what the corresponding minimal attractor $M$ looks like. 
First, we clarify that $I_{\textrm{ss}}$, $I_{\textrm{ws}}$, $I_{\textrm{aws}}$ and $I_{\textrm{aews}}$ are in fact not only closed subspaces but ideals in $\mathrm{C}(K)$. 
\begin{rem} The closed subspaces $I_{\textrm{ss}}$, $I_{\textrm{ws}}$ and $I_{\textrm{aws}}\subseteq \mathrm{C}(K)$ are order or equivalently algebra ideals in $\mathrm{C}(K)$.
\end{rem}
\begin{proof}We only compute this for $I_{\textrm{aws}}$, because $I_{\textrm{ws}}$ and $I_{\textrm{aews}}$ follow analoguosly and $I_{\textrm{ss}}$ is clearly an order ideal. \\
By \Cref{awsi}, $I_{\textrm{aws}}$ is a closed subspace of $\mathrm{C}(K)$. It remains to show that it is an algebra or equivalently a lattice ideal. 
Now take $f\in I_{\textrm{aws}}$ we have to show that $|f|\in I_{\textrm{aws}}$. Take $x\in K$ and recall that for every $t\geq 0$, $|\langle T(t)f,\delta_x\rangle|=|T(t)f(x)|=T(t)|f|(x)$. This implies
\begin{align*}0 \xleftarrow[]{T\to \infty}&\frac{1}{T}\int\limits_0^T \! |\langle T(t)f, \delta_x \rangle| \, \mathrm{d}t \\
=&\frac{1}{T}\int\limits_0^T \! \langle T(t)|f|, \delta_x \rangle \, \mathrm{d}t \\
=& \langle \frac{1}{T}\int\limits_0^T \! T(t)|f| \, \mathrm{d}t, \delta_x \rangle \, .
\end{align*} Applying Lebesgue's Theorem of dominated convergence implies the assertion. 
\end{proof}
\subsection{Uniform attractivity}
The following proposition gives a characterization of uniform attractivity. 
\begin{prop}\label{unif}Let $(K,(\varphi_t)_{t\geq 0})$ be a dynamical system and $\emptyset\neq M\subseteq K$ closed and invariant. Then the following are equivalent.
\begin{itemize}
\item[a)] The set $M$ is uniformly attractive,
\item[b)] $\bigcap\limits_{t\geq 0}\varphi_t(K)\subseteq M$.
\end{itemize}
\end{prop}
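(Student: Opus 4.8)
The plan is to exploit the monotonicity of the family $\{\varphi_t(K)\}_{t\geq 0}$. The first thing I would record is that each $\varphi_t(K)$ is compact, being the continuous image of the compact space $K$, and that the family is nested decreasing: from $\varphi_{t+s}=\varphi_t\circ\varphi_s$ and $\varphi_s(K)\subseteq K$ one gets $\varphi_{t+s}(K)=\varphi_t(\varphi_s(K))\subseteq\varphi_t(K)$. Writing $\Omega:=\bigcap_{t\geq 0}\varphi_t(K)$, this makes $\Omega$ a nonempty compact set. I would also record the elementary fact that, since the sets $U_{\varepsilon,f}$ form a neighborhood basis of $M$ and $K$ is completely regular, one has $\bigcap_{U\in\mathcal{U}(M)}U=M$: indeed, for $x\notin M$ there is $f\in\mathrm{C}(K)$ with $f(M)=0$ and $f(x)=1$, so $U_{1/2,f}$ is a neighborhood of $M$ missing $x$.

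For the implication a)$\Rightarrow$b), suppose $M$ is uniformly attractive and fix any $U\in\mathcal{U}(M)$. By definition there is $t_0>0$ with $\varphi_t(K)\subseteq U$ for all $t\geq t_0$; in particular $\Omega\subseteq\varphi_{t_0}(K)\subseteq U$. Letting $U$ range over $\mathcal{U}(M)$ yields $\Omega\subseteq\bigcap_{U\in\mathcal{U}(M)}U=M$, which is exactly b).

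For the converse b)$\Rightarrow$a), assume $\Omega\subseteq M$ and fix $U\in\mathcal{U}(M)$; it suffices to establish the property for the basic neighborhoods $U_{\varepsilon,f}$, which are open, so I may assume $U$ open. The complement $A:=K\setminus U$ is a closed, hence compact, subset of $K$, and I would consider the nested decreasing family of compact sets $C_t:=\varphi_t(K)\cap A$. Since $U$ is a neighborhood of $M$ we have $M\subseteq U$, so $\bigcap_{t\geq 0}C_t=\Omega\cap A\subseteq M\cap A=\emptyset$. Compactness of $K$ then forces some $C_{t_0}$ to be empty: if every $C_t$ were nonempty, nestedness would make every finite subintersection equal to its smallest member and hence nonempty, so the finite intersection property would give $\bigcap_{t\geq 0}C_t\neq\emptyset$, a contradiction. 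Thus $\varphi_{t_0}(K)\cap A=\emptyset$, i.e. $\varphi_{t_0}(K)\subseteq U$, and by nestedness $\varphi_t(K)\subseteq\varphi_{t_0}(K)\subseteq U$ for all $t\geq t_0$, so $M$ is uniformly attractive.

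The one step demanding care is the compactness argument in b)$\Rightarrow$a): the index set is the continuum $[0,\infty)$ rather than a sequence, so I would phrase it through the finite intersection property rather than through a diagonal or sequential argument, using nestedness to collapse any finite subfamily to a single set. Everything else is a direct unwinding of the definition of uniform attractivity together with the identity $\bigcap_{U\in\mathcal{U}(M)}U=M$.
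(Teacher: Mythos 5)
Your proof is correct, and in the direction a)$\Rightarrow$b) it is essentially the paper's argument: the paper routes it through an auxiliary set $T$ of times, but the content --- for each $U\in\mathcal{U}(M)$ use uniform attractivity to get $\varphi_{t_0}(K)\subseteq U$, then intersect and invoke $\bigcap_{U\in\mathcal{U}(M)}U=M$ --- is identical to yours. The genuine difference is in the converse. The paper's entire proof of b)$\Rightarrow$a) is the chain of inclusions $\bigcap_{t\geq 0}\varphi_t(K)\subseteq M=\bigcap_{V\in\mathcal{U}(M)}V\subseteq U$, which only shows that $\Omega:=\bigcap_{t\geq 0}\varphi_t(K)$ lies inside every neighborhood $U$ of $M$; it never explains how to pass from that to the existence of a finite time $t_0$ with $\varphi_t(K)\subseteq U$ for all $t\geq t_0$, which is what uniform attractivity actually demands. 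That passage is precisely where compactness is indispensable, and it is the step you supply: reduce to $U$ open, form the nested compact sets $C_t=\varphi_t(K)\cap(K\setminus U)$ whose total intersection $\Omega\cap(K\setminus U)$ is empty, and conclude from the finite intersection property (using nestedness, i.e.\ the semiflow law $\varphi_{t+s}(K)\subseteq\varphi_t(K)$, to collapse finite subfamilies to a single member) that some $C_{t_0}$ is already empty, after which $\varphi_t(K)\subseteq\varphi_{t_0}(K)\subseteq U$ for all $t\geq t_0$. So your write-up is not merely a rephrasing but a completion of the paper's argument: the displayed inclusions alone do not prove the implication, since they say nothing about finite times, whereas your finite-intersection-property step does. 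What the paper's version buys is brevity; what yours buys is that the nontrivial implication is actually proved, with the monotonicity and compactness hypotheses it depends on made explicit.
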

\begin{proof} 
To prove {\normalfont{a)$\implies$b)}} note that 
\[\bigcap\limits_{t\geq 0}\varphi_t(K)\subseteq \bigcap\limits_{t\in T}\varphi_t(K)\subseteq \bigcap\limits_{U\in \mathcal{U}(M)}U =M \]
where 
\[T:=\{t_0\geq 0 \mid \varphi_t(K)\subseteq U\textrm{ for some }U\in \mathcal{U}(M)\, , \ t\geq t_0 \}\, . \]

The opposite implication {\normalfont{b)$\implies$a)}} follows since for every $U\in \mathcal{U}(M)$ the following chain holds
\[ \bigcap\limits_{t\geq 0}\varphi_t(K)\subseteq M =\bigcap\limits_{V\in\mathcal{U}(M)}V\subseteq U \, . \]
\end{proof}
As an immediate result we obtain the following.
\begin{prop}Let $(K,(\varphi_t)_{t\geq 0})$ be a dynamical system. Then there exists a unique minimal uniformly attractive subset of $K$ given by 
\[\bigcap_{t\geq 0}\varphi_t(K)\, . \]
\end{prop}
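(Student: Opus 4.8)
The plan is to set $M_0:=\bigcap_{t\geq 0}\varphi_t(K)$ and to show that $M_0$ is a non-empty closed invariant set, that it is uniformly attractive, and that it is contained in every uniformly attractive set; this last point yields both minimality and uniqueness at once. The decisive tool is the preceding \Cref{unif}, which states that a closed invariant set $M$ is uniformly attractive if and only if $\bigcap_{t\geq 0}\varphi_t(K)\subseteq M$. Once $M_0$ is known to be an admissible (non-empty closed invariant) set, its uniform attractivity is immediate, since the condition $\bigcap_{t\geq 0}\varphi_t(K)\subseteq M_0$ holds trivially with equality.

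First I would record that $(\varphi_t(K))_{t\geq 0}$ is a decreasing family of non-empty compact sets: each $\varphi_t(K)$ is the continuous image of the compact space $K$, hence compact and, by Hausdorffness, closed; and for $t\leq s$ one has $\varphi_s(K)=\varphi_t(\varphi_{s-t}(K))\subseteq\varphi_t(K)$. A nested family of non-empty compact subsets of a compact space has non-empty intersection, so $M_0\neq\emptyset$, and being an intersection of closed sets, $M_0$ is closed.

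Next I would verify invariance, i.e.\ $\varphi_s(M_0)\subseteq M_0$ for all $s\geq 0$. Fix $x\in M_0$ and $t\geq 0$; the goal is $\varphi_s(x)\in\varphi_t(K)$. If $t\geq s$, then $x\in\varphi_{t-s}(K)$, say $x=\varphi_{t-s}(z)$, whence $\varphi_s(x)=\varphi_t(z)\in\varphi_t(K)$; if $t<s$, then $\varphi_s(x)=\varphi_t(\varphi_{s-t}(x))\in\varphi_t(K)$ directly. Since $t$ was arbitrary, $\varphi_s(x)\in M_0$, so $M_0$ is invariant. With $M_0$ now a non-empty closed invariant set, \Cref{unif} makes it uniformly attractive.

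Finally, minimality and uniqueness follow from \Cref{unif} in the other direction: if $M$ is any uniformly attractive closed invariant subset, then $M_0=\bigcap_{t\geq 0}\varphi_t(K)\subseteq M$, so $M_0$ is contained in every uniformly attractive set and is therefore the unique minimum with this property. I do not anticipate a serious obstacle: the argument rests entirely on the equivalence from \Cref{unif}, and the only genuine work is the routine check that $M_0$ is non-empty (the compactness/finite-intersection step) and invariant (the case split on $t$ versus $s$).
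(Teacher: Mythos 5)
Your proof is correct and follows essentially the same route as the paper: both rest on \Cref{unif}, which immediately gives uniform attractivity of $\bigcap_{t\geq 0}\varphi_t(K)$ and its containment in every uniformly attractive set, hence minimality and uniqueness. You merely fill in the routine checks (non-emptiness via nested compact sets, closedness, and invariance) that the paper's one-line proof leaves implicit.
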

\begin{proof} The set $\bigcap_{t\geq 0}\varphi_t(K)$ is closed and $(\varphi_t)_{t\geq 0}$-invariant and is uniformly attractive by \Cref{unif} {\normalfont{b)}} and is minimal with this property by construction.
\end{proof}
\begin{rem}\[I_{\textrm{ss}}=I_{\bigcap\limits_{t\geq 0}\varphi_t(K)} \, . \]
\end{rem}
\subsection{Attractivity and $\omega$-limit sets}
It is useful to introduce $\omega$-limit sets to study asymptotic properties of dynamical systems. Similar concepts have already been used by H. Poincaré, but G. D. Birkhoff  first introduced the term $\omega$-limit set in \cite[Chapt. VII, p.198]{birkhoff}. The characterization of attractors via $\omega$-limit sets is due to N.P. Bhatia and G.P. Szegö and can be found in \cite[Chapt. II]{bhatia}.
\begin{defi}
For $x\in K$ we define the $\omega$-limit set of $x$ as 
\[\omega(x):=\bigcap\limits_{T \geq 0} \overline{\{\varphi_t(x)\mid t\geq T\}} \, . \]
\end{defi}
\begin{prop} For every $x\in \Omega$
\[ \omega(x)=\{y\in \Omega \mid \exists \textrm{ a net }(t_i)_{i\in I}\textrm{ in }[0,\infty)\, ,\ t_i\to \infty\textrm{ such that }\varphi_{t_i}(x)\to y\, , i\in I\} \, . \]
\end{prop}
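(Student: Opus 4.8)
The plan is to unwind the definition $\omega(x)=\bigcap_{T\geq 0}A_T$, writing $A_T:=\overline{\{\varphi_t(x)\mid t\geq T\}}$, and to establish the two inclusions separately. The inclusion ``$\supseteq$'' is the routine one. If $(t_i)_{i\in I}$ is a net with $t_i\to\infty$ and $\varphi_{t_i}(x)\to y$, then fixing any $T\geq 0$ there is an index $i_0\in I$ with $t_i\geq T$ for all $i\geq i_0$, so the tail $(\varphi_{t_i}(x))_{i\geq i_0}$ is a subnet lying entirely inside $\{\varphi_t(x)\mid t\geq T\}$ that still converges to $y$. Hence $y\in A_T$, and since $T\geq 0$ was arbitrary, $y\in\bigcap_{T\geq 0}A_T=\omega(x)$.

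For the inclusion ``$\subseteq$'' I would take $y\in\omega(x)$ and build an explicit net realizing $y$ as a limit along times tending to infinity. The natural index set is the product $I:=\mathcal{U}(\{y\})\times[0,\infty)$, where $\mathcal{U}(\{y\})$ denotes the neighborhood system of the point $y$ directed by reverse inclusion and $[0,\infty)$ carries its usual order; this $I$ is directed coordinatewise by declaring $(U,T)\preceq(U',T')$ iff $U'\subseteq U$ and $T\leq T'$. The content of $y\in\omega(x)$ is precisely that $y\in A_T$ for every $T$, i.e. $y$ lies in the closure of $\{\varphi_t(x)\mid t\geq T\}$; therefore for each index $i=(U,T)$ there exists $t_i\geq T$ with $\varphi_{t_i}(x)\in U$. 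This selection defines the candidate net $(\varphi_{t_i}(x))_{i\in I}$.

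It then remains to verify the two convergence requirements, which is where the product structure of $I$ pays off. For $t_i\to\infty$, given $N>0$ one takes $i_0:=(K,N)$ (with $K$ the whole space as trivial neighborhood) and reads off $t_i\geq N$ for all $i\succeq i_0$. For $\varphi_{t_i}(x)\to y$, given a neighborhood $V\in\mathcal{U}(\{y\})$ one takes $i_0:=(V,0)$ and obtains $\varphi_{t_i}(x)\in U\subseteq V$ for all $i=(U,T)\succeq i_0$. This closes the argument.

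The main obstacle is the forward inclusion, and specifically the fact that $K$ need not be metrizable, so sequences do not suffice and one genuinely has to pass to nets. The delicate point is to force $t_i\to\infty$ and $\varphi_{t_i}(x)\to y$ \emph{simultaneously}; neither the neighborhood filter of $y$ alone nor the parameter $[0,\infty)$ alone can do this, and it is exactly the coordinatewise-directed product $\mathcal{U}(\{y\})\times[0,\infty)$ that lets a single net control both the approach to $y$ and the divergence of the times. Once the correct index set is identified, the remaining verifications are immediate.
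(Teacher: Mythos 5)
Your proof is correct, and in fact it is more complete than the paper's own argument. The ``$\supseteq$'' inclusion is handled identically in both: pass to a tail of the net beyond an index $i_0$ with $t_i\geq T$, so that the tail lies in $\{\varphi_t(x)\mid t\geq T\}$ and still converges to $y$. The difference is in the ``$\subseteq$'' inclusion. The paper merely observes that $y\in\omega(x)$ implies $y\in\overline{\orb(x)}$ and invokes the definition of closure to produce \emph{some} net in the orbit converging to $y$; as written, this does not guarantee that the associated times tend to infinity (for instance, if $x$ is a fixed point and $y=x$, the constant net with $t_i=0$ already converges), and it uses only the single set $\overline{\orb(x)}$ rather than the full intersection over all $T\geq 0$. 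Your construction over the coordinatewise-directed product $\mathcal{U}(\{y\})\times[0,\infty)$ — choosing for each pair $(U,T)$ a time $t_{(U,T)}\geq T$ with $\varphi_{t_{(U,T)}}(x)\in U$, which exists precisely because $y\in\overline{\{\varphi_t(x)\mid t\geq T\}}$ for \emph{every} $T$ — is exactly the patching argument needed to control convergence to $y$ and divergence of the times simultaneously. So your proof supplies the step the paper elides, and your closing remark correctly identifies why nets (rather than sequences) and the product index set are the essential ingredients when $K$ is not metrizable.
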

\begin{proof} Take $x\in \Omega$ and $y\in \omega(x)$, i.e., $y\in \overline{\{\varphi_t(x)\mid t\geq T\}}$ for all $T\geq 0$. In particular, $y\in \overline{\orb(x)}$. Thus, by definition of the closure there exists a net in $\orb(x)$ converging to $y$. For the other implication let $(t_i)_{i\in I}$ be a net with $t_i\to \infty$ such that $\varphi_{t_i}(x)$ converges to $y$. For fixed $T\geq 0$ there exists $i_0\in I$ such that $t_i \geq T$ for all $i\geq i_0$. Then $(\varphi_{t_i}(x))_{i\in I, i\geq i_0}$ is still a net converging to $y$. Therefore, $y\in \overline{\{\varphi_t(x)\mid t\geq T\}}$ for all $T\geq 0$. Thus follows the assertion. 
\end{proof}Next, we discuss some properties of $\omega$-limit sets.
\begin{prop} The set $\omega(x)$ is non-empty, closed and invariant under $(\varphi_t)_{t\geq 0}$ for all $x\in \Omega$.
\end{prop}
\begin{proof} Take $x\in \Omega$. The set $\omega(x)$ is closed by definition as an intersection of closed sets and non-empty by the finite intersection property of $K$. For the invariance take $r>0$ and $y\in \omega(x)$. Then there exists a net $(\varphi_{t_i}(x))_{i\in I}$ converging to $y$ for $i\in I$, since $\varphi_r$ is continuous, $(\varphi_r(\varphi_{t_i}(x))_{i\in I}$ converges to $\varphi_r(y)$, thus $\varphi_r(y)\in\omega(x)$.  \end{proof}
\begin{prop}\label{omega} Let $(K,(\varphi_t)_{t\geq 0})$ be a dynamical system and $\emptyset\neq M \subseteq K$ closed and invariant. Then the following are equivalent.
\begin{itemize}
\item[a)] The set $M$ is attractive,
\item[b)] $\omega(x)\subseteq M$ for all $x\in K$.
\end{itemize}
\end{prop}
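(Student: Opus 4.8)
The plan is to prove both implications directly, using the net characterization of $\omega(x)$ established above together with the complete regularity of $K$; no operator theory is needed here.

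For the implication b)\,$\Rightarrow$\,a) I would argue by contradiction. Suppose $\omega(x)\subseteq M$ for all $x$ but $M$ is not attractive. Then there exist $x\in K$ and a neighborhood $U\in\mathcal{U}(M)$ such that the set $\{t\geq 0\mid \varphi_t(x)\notin U\}$ is unbounded, so I can choose a sequence $t_n\to\infty$ with $\varphi_{t_n}(x)\in K\setminus U$. Since $K\setminus U$ is a closed subset of the compact space $K$, it is compact, and hence the net $(\varphi_{t_n}(x))_{n}$ admits a convergent subnet $\varphi_{t_{n_i}}(x)\to y$ with $y\in K\setminus U$. The associated times $(t_{n_i})$ still tend to infinity, so by the net characterization $y\in\omega(x)\subseteq M\subseteq U$, contradicting $y\in K\setminus U$. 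Thus $M$ must be attractive.

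For a)\,$\Rightarrow$\,b) I would fix $x\in K$ and take any basic neighborhood $U_{\varepsilon,f}=[|f|<\varepsilon]$ with $f\in I_M$ and $\varepsilon>0$. By attractivity there is $t_0>0$ with $\varphi_t(x)\in U_{\varepsilon,f}$ for all $t\geq t_0$, so from the definition $\omega(x)=\bigcap_{T\geq 0}\overline{\{\varphi_t(x)\mid t\geq T\}}$ we get $\omega(x)\subseteq\overline{\{\varphi_t(x)\mid t\geq t_0\}}\subseteq\overline{U_{\varepsilon,f}}\subseteq[|f|\leq\varepsilon]$. Intersecting over all $f\in I_M$ and $\varepsilon>0$, and using that $\bigcap\{[|f|\leq\varepsilon]\mid f\in I_M,\ \varepsilon>0\}=M$ (a consequence of complete regularity: any $z\notin M$ is separated from $M$ by some $f\in I_M$ with $|f(z)|=1$, whence $z\notin[|f|\leq \tfrac12]$), I conclude $\omega(x)\subseteq M$.

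The routine parts are the separation arguments. The one genuine point of care is that $K$ is only assumed compact Hausdorff and not metrizable, so in b)\,$\Rightarrow$\,a) I must extract a convergent \emph{subnet} rather than a subsequence and check that the associated times still diverge; this is exactly where the net characterization of $\omega(x)$ proved above is used. Everything else reduces to the fact that the closures of the basic neighborhoods $U_{\varepsilon,f}$ shrink down to $M$.
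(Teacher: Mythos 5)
Your proof is correct and takes essentially the same route as the paper's: the direction b)\,$\Rightarrow$\,a) is the identical contradiction--compactness--subnet argument, and a)\,$\Rightarrow$\,b) is the paper's observation that $\omega(x)$ lies in (the closure of) every neighborhood of $M$, whose intersection is $M$ itself. You merely make explicit, via the basic neighborhoods $U_{\varepsilon,f}$ and the inclusion $\overline{U_{\varepsilon,f}}\subseteq[|f|\leq\varepsilon]$, a closure detail that the paper's one-line statement $\omega(x)\subseteq\bigcap_{U\in\mathcal{U}(M)}U=M$ leaves implicit.
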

\begin{proof} To prove {\normalfont{a)$\implies$b)}} take $x\in K$. By {\normalfont{a)}} 
\[\omega(x)\subseteq \bigcap_{U\in\mathcal{U}(M)}U=M \, . \]
Consider $U\in\mathcal{U}(M)$ open and assume that {\normalfont{a)}} does not hold, i.e., there exists $x\in K\setminus M$ with $\varphi_t(x)\in U^c$ for infinitely many $t>0$. Since $U^c$ is closed and hence compact there exists a convergent subnet $(t_i)_{i\in I}$, $t_i\to \infty$ such that $\varphi_{t_i}(x)\to z\in U^c$ which is a contradiction to {\normalfont{b)}}.
\end{proof}
\begin{prop} Let $(K,(\varphi_t)_{t\geq 0})$ be a dynamical system. Then there exists a unique minimal attractive subset of $K$ given by
\[\overline{\bigcup_{x\in K}\omega(x)}\, .  \]
\end{prop}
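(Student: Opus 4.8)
The plan is to reduce the entire statement to \Cref{omega}, which already characterizes attractivity of a closed invariant set $M$ by the single condition $\omega(x)\subseteq M$ for all $x\in K$. Write $M^\ast:=\overline{\bigcup_{x\in K}\omega(x)}$. First I would verify that $M^\ast$ is a legitimate candidate, i.e.\ a non-empty, closed, $(\varphi_t)_{t\geq 0}$-invariant subset. Non-emptiness and closedness are immediate: each $\omega(x)$ is non-empty by the preceding results on $\omega$-limit sets, and $M^\ast$ is by definition a closure. For invariance I would first note that each $\omega(x)$ is invariant, so the union $A:=\bigcup_{x\in K}\omega(x)$ is invariant, and then transfer invariance to the closure using continuity of the flow: since each $\varphi_t$ is continuous it maps the closure of a set into the closure of its image, so $\varphi_t(\overline{A})\subseteq\overline{\varphi_t(A)}\subseteq\overline{A}=M^\ast$ for every $t\geq 0$.

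Next I would establish that $M^\ast$ is attractive. By \Cref{omega} it suffices to check that $\omega(x)\subseteq M^\ast$ for every $x\in K$, and this holds trivially because $\omega(x)\subseteq A\subseteq M^\ast$. For minimality and uniqueness I would then take an arbitrary attractive closed invariant set $N$. Applying \Cref{omega} in the other direction yields $\omega(x)\subseteq N$ for all $x\in K$, whence $A\subseteq N$ and, since $N$ is closed, $M^\ast=\overline{A}\subseteq N$. Thus $M^\ast$ is contained in every attractive closed invariant set; being itself attractive, it is the smallest one, and hence the unique minimal attractive subset of $K$.

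I do not expect a serious obstacle, since \Cref{omega} carries the essential content and both inclusions $A\subseteq M^\ast$ and $A\subseteq N$ are formal. The only mildly delicate point is the passage from invariance of the union $A$ to invariance of its closure $M^\ast$, which genuinely uses continuity of each $\varphi_t$ rather than mere invariance of $A$; this is the step where I would take care to invoke continuity explicitly.
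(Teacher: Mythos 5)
Your proof is correct and follows essentially the same route as the paper: both reduce attractivity and minimality to \Cref{omega} applied in each direction, with closedness of a competing attractor $N$ giving $\overline{\bigcup_{x\in K}\omega(x)}\subseteq N$. You are in fact slightly more careful than the paper, which asserts invariance and attractivity of the closure ``by construction'' while you justify the invariance of the closure explicitly via continuity of each $\varphi_t$.
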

\begin{proof}
In \Cref{omega} {\normalfont{b)}} we have seen that $\omega(x)$ is contained in every closed, $(\varphi_t)_{t\geq 0}$-invariant and attractive subset $\emptyset\neq M\subseteq K$ therefore also
\[\bigcup_{x\in K}\omega(x)\subseteq M\, . \] Also the closure $\overline{\bigcup_{x\in K}\omega(x)}$ is contained in every such $M$ and $(\varphi_t)_{t\geq 0}$-invariant, attractive itself and minimal with this property by construction. 
\end{proof}
\begin{rem}\[I_{\textrm{ws}}=I_{\overline{\bigcup\limits_{x\in K}\omega(x)}} \, . \]
\end{rem}
\begin{prop}\label{fast} Let $(K,(\varphi_t)_{t\geq 0})$ be a dynamical system with $K$ metric, $\mu$ a quasi invariant regular Borel measure on $K$ and $\emptyset\neq M \subseteq K$ closed and invariant. Then the following are equivalent.
\begin{itemize}
\item[a)] The set $M$ is a likely limit set,
\item[b)]  there exists a quasi invariant regular Borel measure on $K$ such that $\omega(x)\subseteq M$ for $\mu$-almost every $x\in K$.
\end{itemize}
\end{prop}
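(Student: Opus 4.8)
The plan is to prove both implications with the \emph{same} measure $\mu$, so that the statement reduces to: for a fixed quasi invariant regular Borel measure $\mu$, the set $M$ is a likely limit set with respect to $\mu$ if and only if $\omega(x)\subseteq M$ for $\mu$-almost every $x$. The essential ingredient, exactly as in the remark following \Cref{DS}, is that since $K$ is metric the set $M$ admits a \emph{countable} neighborhood basis, so that a single $\mu$-null set can be made to serve simultaneously for all $U\in\mathcal{U}(M)$. Concretely, I would fix a decreasing sequence $(U_n)_{n\in\mathbb{N}}$ of open neighborhoods of $M$ with $\overline{U_{n+1}}\subseteq U_n$ and $\bigcap_{n}\overline{U_n}=M$; one may take $U_n:=\{x\in K\mid d(x,M)<1/n\}$, whose closed hulls intersect in $M$ precisely because $M$ is closed, and each $U_n\in\mathcal{U}(M)$ since it is an open set containing $M$.

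For the implication a) $\implies$ b), assume $M$ is a likely limit set for $\mu$. For each $n$ the defining property of \Cref{DS} c) yields a $\mu$-null set $N_n$ such that for every $x\notin N_n$ there is $t_0>0$ with $\varphi_t(x)\in U_n$ for all $t\geq t_0$. Set $N:=\bigcup_{n}N_n$, which is again $\mu$-null. If $x\notin N$, then for each $n$ the orbit tail $\{\varphi_t(x)\mid t\geq t_0\}$ lies in $U_n$, whence $\omega(x)\subseteq\overline{U_n}$ directly from the definition of $\omega(x)$; intersecting over $n$ gives $\omega(x)\subseteq\bigcap_n\overline{U_n}=M$. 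Thus $\omega(x)\subseteq M$ for every $x\notin N$, which is exactly b) with the same $\mu$.

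For the implication b) $\implies$ a), let $N$ be the $\mu$-null set off which $\omega(x)\subseteq M$. Fix an open $U\in\mathcal{U}(M)$ and a point $x\notin N$, and suppose toward a contradiction that $\varphi_t(x)$ does not eventually remain in $U$; then there is a sequence $t_k\to\infty$ with $\varphi_{t_k}(x)\in K\setminus U$. Since $K\setminus U$ is closed in the compact metric space $K$ it is sequentially compact, so a subsequence converges to some $z\in K\setminus U$; but $z\in\omega(x)\subseteq M\subseteq U$, a contradiction. Hence for every $U\in\mathcal{U}(M)$ and every $x\notin N$ there is $t_0>0$ with $\varphi_t(x)\in U$ for all $t\geq t_0$, which is precisely the likely limit set condition, realized with the single null set $N$. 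This is the measure theoretic analogue of \Cref{omega}, and the compactness argument is literally the one employed there.

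The routine verifications are that $\bigcap_n\overline{U_n}=M$ and that the orbit tail lying in $U_n$ forces $\omega(x)\subseteq\overline{U_n}$; both are immediate. The only genuinely delicate point—and the reason metrizability is hypothesized—is the passage from ``one null set per neighborhood'' to ``one null set for all neighborhoods,'' which I expect to be the main thing to get right, together with the choice of the sequence $(U_n)$ exhausting the neighborhood filter of $M$. Everything else transfers verbatim from the deterministic characterization in \Cref{omega}.
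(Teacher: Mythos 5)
Your proof is correct and takes essentially the same route as the paper, whose entire proof is the single line that the claim ``follows directly from \Cref{omega}'': both directions are exactly the $\omega$-limit characterization applied pointwise off a null set, with the compactness argument of \Cref{omega} reused for b) $\implies$ a). The only difference is that you spell out the countable-neighborhood-basis bookkeeping needed to pass from one null set per neighborhood to a single null set, which the paper leaves implicit in its one-line proof and in the remark following \Cref{DS}.
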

\begin{proof} This follows directly from \Cref{omega}.
\end{proof}
\begin{rem} Let $(K,(\varphi_t)_{t\geq 0})$ be a dynamical system with $K$ metric, $\mu$ a quasi invariant regular Borel measure on $K$, then there exists a $\mu$-null set $N$ such that
\[I_{\textrm{aews}}=I_{\overline{\bigcup\limits_{x\in N^c}\omega(x)}} \, . \]
\end{rem}

\subsection{Minimal centers of attraction and ergodic measures}
An interesting fact is that the minimal center of attraction is characterized by the ergodic probability measures on $K$. We recall that a regular Borel measure is called \emph{invariant} if $\mu(\varphi_t^{-1}(A))=\mu(A)$ for all Borel measurable sets $A$ and $t\geq 0$. We then call an invariant regular Borel measure ergodic if the corresponding measure-preserving system $(K,(\varphi_t)_{t\geq 0}, \mu)$ is ergodic, i.e. if $A\subset K$ Borel measurable and invariant then $\mu(A)\in \{0,1\}$.
\begin{prop}\label{mcoa}
The minimal center of attraction is given by the union of supports of ergodic measures, i.e.,
\[I_{\textrm{aws}}=I_{\overline{\bigcup\limits_{\mu\textrm{ erg. }}\textrm{supp}(\mu)}} \, . \]
\end{prop}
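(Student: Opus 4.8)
The statement is equivalent to $L=E$, where $L$ denotes the minimal center of attraction, characterised at the beginning of this section by $I_L=I_{\textrm{aws}}$ (see \Cref{main}~(IV)), and $E:=\overline{\bigcup_{\mu\textrm{ erg.}}\textrm{supp}(\mu)}$. Since $M\mapsto I_M$ reverses inclusions, I would prove the two set inclusions $E\subseteq L$ and $L\subseteq E$ separately.

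\emph{The inclusion $E\subseteq L$.} Here I would show that every invariant probability measure is supported in $L$. Let $\mu$ be invariant and $f\in I_L=I_{\textrm{aws}}$. Invariance gives $\langle T(t)f,\mu\rangle=\int_K f\circ\varphi_t\,\mathrm{d}\mu=\int_K f\,\mathrm{d}\mu$ for every $t\geq 0$, so this pairing is constant in $t$; the defining property of $I_{\textrm{aws}}$ (cf. \Cref{fss}) then forces $\frac1T\int_0^T|\langle T(t)f,\mu\rangle|\,\mathrm{d}t=|\int_K f\,\mathrm{d}\mu|\to 0$, whence $\int_K f\,\mathrm{d}\mu=0$. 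Thus $\mu$ annihilates $I_L$, and a routine Urysohn argument (using regularity of $\mu$ together with complete regularity of $K\setminus L$) shows $\textrm{supp}(\mu)\subseteq L$. Ergodic measures are in particular invariant, so $\bigcup_{\mu\textrm{ erg.}}\textrm{supp}(\mu)\subseteq L$ and, $L$ being closed, $E\subseteq L$.

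\emph{The inclusion $L\subseteq E$.} By minimality of $L$ it suffices to show that $E$ is itself a center of attraction. The key preliminary fact is that, conversely to the previous step, \emph{every} invariant probability measure is already supported in $E$: the set of invariant probability measures is a nonempty (Krylov--Bogolyubov), convex, weak-$*$ compact subset of $\mathrm{C}(K)'$ whose extreme points are exactly the ergodic measures, so by the Krein--Milman theorem each invariant measure is a weak-$*$ limit of convex combinations of ergodic ones; as each such combination is supported in $E$ and the measures supported in the closed set $E$ form a weak-$*$ closed set, the claim follows. To deduce that $E$ is a center of attraction, I would fix an open $U\supseteq E$, put $A:=U^c$ (a compact subset of $K\setminus E$), choose by Urysohn a function $g\in I_E$ with $0\leq g\leq 1$ and $g\equiv 1$ on $A$, and argue by contradiction: if $\frac1t\int_0^t g(\varphi_s(x_0))\,\mathrm{d}s\not\to 0$ for some $x_0$, then along some $t_n\to\infty$ the empirical measures $\mu_n:=\frac1{t_n}\int_0^{t_n}\delta_{\varphi_s(x_0)}\,\mathrm{d}s$ satisfy $\int g\,\mathrm{d}\mu_n\geq c>0$; any weak-$*$ cluster point $\nu$ of $(\mu_n)$ is invariant (by the standard Krylov--Bogolyubov estimate), so $\int g\,\mathrm{d}\nu\geq c$, contradicting $\textrm{supp}(\nu)\subseteq E$ and $g|_E\equiv 0$. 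Since $\mathds{1}_A\leq g$, this yields $\frac1t\lambda(\{s\in[0,t]\mid\varphi_s(x)\in A\})\to 0$ for all $x$, i.e. $E$ is a center of attraction.

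The two inclusions give $L=E$, hence $I_{\textrm{aws}}=I_E$. I expect the main obstacle to be the second inclusion: the forward direction is a short computation, whereas locating \emph{all} invariant measures inside $E$ (via Krein--Milman together with the identification of the extreme points of the set of invariant measures with the ergodic measures) and converting this measure-theoretic information into the Cesàro/time-average statement defining a center of attraction (via empirical measures and a Krylov--Bogolyubov cluster-point argument) is where the real content lies. A secondary technical point is that, since $K$ is only assumed compact Hausdorff, the compactness arguments should be phrased with nets and subnets rather than sequences.
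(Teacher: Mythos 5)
Your argument is correct, and it shares its two key ingredients with the paper's proof: the computation showing that every invariant measure annihilates $I_{\textrm{aws}}$ (the paper does this via $\langle|f|,\mu\rangle=\frac1t\int_0^t\langle|T(s)f|,\mu\rangle\,\mathrm{d}s\to 0$ using the lattice homomorphism property, you via constancy of $\langle T(t)f,\mu\rangle$ --- the same fact), and the Krylov--Bogolyubov observation that weak-$*$ cluster points of the empirical measures $\frac1t\int_0^t T(s)'\delta_x\,\mathrm{d}s=\frac1t\int_0^t\delta_{\varphi_s(x)}\,\mathrm{d}s$ are invariant. Where you genuinely diverge is in the packaging of the second inclusion and in the ergodic/invariant reduction. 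The paper stays at the ideal level: it proves $I_{\textrm{aws}}=I_{\overline{\bigcup_{\mu\,\textrm{inv.}}\textrm{supp}(\mu)}}$ directly, showing that a function vanishing on all supports of invariant measures satisfies the Ces\`aro condition against Dirac measures (hence, via \Cref{fss} and dominated convergence, against the whole dual ball), and only at the end quotes \cite[Prop. 3.5]{hekr} for $\overline{\bigcup_{\mu\,\textrm{inv.}}\textrm{supp}(\mu)}=\overline{\bigcup_{\mu\,\textrm{erg.}}\textrm{supp}(\mu)}$. You instead show that $E$ is a center of attraction in the sense of \Cref{DS} and conclude by minimality of $L$ through \Cref{main}(IV) --- an equivalent route, since \Cref{main}(IV) is precisely the bridge between the dynamical and operator-theoretic formulations --- and you make the ergodic/invariant step self-contained by Krein--Milman (ergodic measures as extreme points of the weak-$*$ compact convex set of invariant measures, together with weak-$*$ closedness of the set of probability measures supported in the closed set $E$). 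Your version buys a citation-free proof that stays close to the dynamical definition; the paper's is shorter because it never leaves $\mathrm{C}(K)$-duality. One small point to add before invoking minimality: you should check that $E$ is nonempty and $(\varphi_t)_{t\geq 0}$-invariant (supports of invariant measures are invariant sets, and this passes to closures of unions), since \Cref{DS} requires this of a center of attraction.
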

\begin{proof}
We have to show that \[I_{\textrm{aws}}=I_{\overline{\bigcup\limits_{\mu\textrm{ inv.}}\textrm{supp}(\mu)}}\, .\]
First we show ``$\subseteq$''. Let $\mu\in \C(K)'$ be an invariant measure. For $f\in I_{\textrm{aws}}$
\begin{align*}\langle |f|,\mu\rangle = &\frac{1}{t}\int_0^t\! \langle |f|,\mu\rangle \, \mathrm{d} s\\
\overset{\mu \textrm{ inv.}}{=} &\frac{1}{t}\int_0^t\! \langle T(s)|f| , \mu \rangle\, \mathrm{d}s\\
\overset{T(t)\textrm{ lattice hom.}}{=}  &\frac{1}{t}\int_0^t\! \langle |T(s)f| , \mu \rangle\, \mathrm{d}s\to 0 \, . \end{align*}
Therefore, $f\big|_{\mathrm{supp}\mu}\equiv 0$ for all invariant measures $\mu$. For the implication ``$\supseteq$'' let $x\in K$ and $\delta_x$ the corresponding Dirac measure and $f\in I_{\textrm{aws}}$. We observe that 
\begin{align*}\frac{1}{t}\int_0^t\! |\langle f,T(s)'\delta_x\rangle|\, \mathrm{d}s &= \frac{1}{t}\int_0^t\! \langle |f|,T(s)'\delta_x\rangle\, \mathrm{d}s\\
 &= \langle|f|,\frac{1}{t}\int_0^t\! T(s)'\delta_x\, \mathrm{d}s\rangle \, .
 \end{align*}
Since the dual unit ball $B'$ is compact in the weak-*-topology and $\frac{1}{T}\int_0^T\! T(t)'\delta_x$ is bounded, every subnet of $\frac{1}{T}\int_0^T\! T(t)'\delta_x$ has a convergent subnet in $B'$, i.e.,
\[\langle|f|,\frac{1}{t_i}\int_0^{t_i}\! T(s)'\delta_x\rangle\, \mathrm{d}s \to \langle|f|, \mu\rangle \] with $\mu$ invariant. By \cite[Prop.3.5]{hekr} it follows that $ \overline{\bigcup\limits_{\mu\textrm{ inv. }}\textrm{supp}(\mu)}=\overline{\bigcup\limits_{\mu\textrm{ erg. }}\textrm{supp}(\mu)}$.
\end{proof}
\section*{Compliance with ethical standards}
Conflict of interest: The author declares that she has no conflict of interest. \\
Ethical approval: This article does not contain any studies with human participants or animals performed by the author. 

\printbibliography
\end{document}